\newtheorem{thm}{Theorem}[section]
\newtheorem{defn}[thm]{Definition}
\newtheorem{conv}[thm]{Convention}
\newtheorem{lemma}[thm]{Lemma}
\newtheorem{prop}[thm]{Proposition}
\newtheorem{cor}[thm]{Corollary}
\newtheorem{rmk}[thm]{Remark}
\newtheorem{rmks}[thm]{Remarks}
\newtheorem{example}[thm]{Example}
\def\indsym#1#2{%
  \setbox0=\hbox{$\m@th#1x$}%
  \kern\wd0%
  \hbox to 0pt{\hss$\m@th#1\mid$\hbox to 0pt{$\m@th#1^{#2}$}\hss}%
  \lower.9\ht0\hbox to 0pt{\hss$\m@th#1\smile$\hss}%
  \kern\wd0}
\def\nindsym#1#2{%
  \setbox0=\hbox{$\m@th#1x$}%
  \kern\wd0%
  \hbox to 0pt{\hss$\m@th#1\not$\kern1.4\wd0\hss}
  \hbox to 0pt{\hss$\m@th#1\mid$\hbox to 0pt{$\m@th#1^{\,#2}$}\hss}%
  \lower.9\ht0\hbox to 0pt{\hss$\m@th#1\smile$\hss}%
  \kern\wd0}
\def\dotminussym#1#2{%
  \setbox0=\hbox{$\m@th#1-$}%
  \kern.5\wd0%
  \hbox to 0pt{\hss\hbox{$\m@th#1-$}\hss}%
  \raise.6\ht0\hbox to 0pt{\hss$\m@th#1.$\hss}%
  \kern.5\wd0}
\def \Proj{\operatorname{proj}}
\def \Span{\operatorname{span}}
\def \Dom{\operatorname{dom}}
\def \St{\operatorname{st}}
\def \Id{\operatorname{id}}
\newcounter{pointnumber}
\title[Nonstandard approach to the direct integral spectral theorem]{A nonstandard approach to the direct integral version of the spectral theorem}
\author{Isaac Goldbring}
\thanks{I. Goldbring was partially supported by NSF grant DMS-2054477. }
\address{Department of Mathematics\\University of California, Irvine, 340 Rowland Hall (Bldg.\# 400),
Irvine, CA 92697-3875}
\email{isaac@math.uci.edu}
\urladdr{http://www.math.uci.edu/~isaac}
\author{Fabrice Nonez}
\address{Department of Mathematics \& Statistics\\ Concordia University, 1455 De Maisonneuve Blvd. W.,
Montréal, Québec, Canada H3G 1M8}
\email{fabrice.nonez@mail.concordia.ca}
\begin{document}

\begin{abstract}
We use nonstandard methods to prove the direct integral version of the Spectral Theorem for Unbounded Self-adjoint Operators.  Our proof avoids the standard reduction to the case of bounded normal operators via the Cayley transform and, as such, works uniformly for both real and complex Hilbert spaces.  Our method also yields a new nonstandard proof of the spectral measure version of the Spectral Theorem.  
\end{abstract}

\maketitle

\section{Introduction}

\subsection{Statement of the main result}  

Given a direct integral $\int_{\mathbb{R}}^\oplus H_td\mu(t)$ of Hilbert spaces\footnote{See the appendix of this paper for more information on direct integrals of Hilbert spaces.}, the ``multiplication operator'' $T$ on $\int_{\mathbb{R}}^\oplus H_td\mu(t)$ given by $T(X)(t):=t\cdot X(t)$ is an unbounded self-adjoint operator\footnote{We assume the reader is familiar with the basics of unbounded operators on Hilbert spaces.  The reader unfamiliar with this material can consult \cite[Chapter X]{conway2019course}.}.  One version of the Spectral Theorem for Unbounded Operators states that all unbounded self-adjoint operators on a separable Hilbert space are essentially of this form.  More precisely, if $A$ is an unbounded self-adjoint operator on the Hilbert space $H$, then there is a unitary $U:H\to \int_{\mathbb{R}}^\oplus H_td\mu(t)$ for some direct integral $\int_{\mathbb{R}}^\oplus H_td\mu(t)$ such that $U(A(x))=T(U(x))$ for all $x\in \Dom(A)$.  Viewing the direct integral construction as the natural generalization of the direct sum construction, this version of the Spectral Theorem is the obvious generalization of the usual finite-dimensional Spectral Theorem in terms of diagonalizable operators.

In this paper, we use the nonstandard methods inspired by \cite{GOLDBRING2021590} and \cite{nonez2024spectralequivalencesnonstandardsamplings} to prove the previous theorem.  We actually prove the above theorem in a slightly more general form:

\begin{thm}\label{theorem_spectral_direct}
	Given a (real or complex) separable Hilbert space $H$ and an unbounded symmetric operator\footnote{We always assume that our unbounded operators are densely defined.} $A$ on $H$, there exists a Borel probability measure $\mu$ on $\mathbb{R}$, a measurable family $\{H_{t} \}_{t\in\mathbb{R}}$ of Hilbert spaces, and an isometry $$U:H\rightarrow\int_{\mathbb{R}}^{\oplus}H_{t}d\mu(t)$$ such that, for any $x\in\Dom(A)$, $U(Ax)(t)=t\cdot \left(U(x)(t)\right)$ for $\mu$-almost all $
t\in\mathbb{R}$.
\end{thm}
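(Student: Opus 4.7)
The plan is to mimic the finite-dimensional spectral theorem at the hyperfinite level and then descend to a standard direct integral via Loeb measure theory, in the spirit of \cite{GOLDBRING2021590} and \cite{nonez2024spectralequivalencesnonstandardsamplings}. Fix a countable dense sequence $(x_n)$ in $\Dom(A)$ and pass to a sufficiently saturated nonstandard extension. Saturation provides a hyperfinite-dimensional internal subspace $\mathcal{H} \subseteq {}^*\Dom(A)$ containing each ${}^*x_n$, and on which the internal orthogonal compression $A_{\mathcal{H}} := P\,({}^*A)\,P$, with $P$ the internal orthogonal projection of ${}^*H$ onto $\mathcal{H}$, is internally symmetric and agrees with ${}^*A$ on every ${}^*x_n$.

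By internal transfer of the finite-dimensional spectral theorem, $A_{\mathcal{H}}$ admits an internal orthonormal eigenbasis $(e_i)_{i=1}^{N}$ with real internal eigenvalues $\lambda_1,\ldots,\lambda_N$. To build $\mu$, fix a reference vector $\xi \in \Dom(A)$ with $\|\xi\|=1$ (for instance a suitably weighted sum of the $x_n$), expand ${}^*\xi = \sum_i c_i e_i$, and form the internal probability measure $\nu(\{i\}) := |c_i|^2$ on $\{1,\ldots,N\}$. The pushforward of its associated Loeb measure under the map $i \mapsto \St(\lambda_i)$ is a Borel probability measure $\mu$ on $\mathbb{R}$, provided the set of indices with $|\lambda_i|$ infinite is Loeb-null; this is exactly where the finiteness of $\|{}^*A\,{}^*\xi\|$, guaranteed by $\xi \in \Dom(A)$, is used.

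For each standard $t \in \mathbb{R}$ I would then define $H_t$ as the nonstandard-hull fiber assembled from those $e_i$ with $\lambda_i$ infinitely close to $t$, equipped with the measurable-field structure inherited from the internal eigendecomposition. The map $U$ sends $x \in H$ to the section $t \mapsto $ the nonstandard-hull class of $\sum_{\lambda_i \approx t} \langle {}^*x, e_i\rangle\,e_i$. Internal Parseval combined with the Loeb change-of-variables formula yields the isometry equation $\|x\|^2 = \int_{\mathbb{R}} \|U(x)(t)\|^2\,d\mu(t)$, while the intertwining identity $U(Ax)(t) = t \cdot U(x)(t)$ is built in: every basis vector contributing to the fiber over $t$ is mapped by ${}^*A$ to a scalar multiple by a value infinitely close to $t$.

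The two main obstacles I anticipate are: (i) rigorously installing a measurable-field structure on $\{H_t\}_{t \in \mathbb{R}}$ so that the target of $U$ is a genuine direct integral of Hilbert spaces as in the appendix, which is precisely where the Loeb measure and nonstandard-hull technology of \cite{GOLDBRING2021590, nonez2024spectralequivalencesnonstandardsamplings} has to do real work; and (ii) arranging that $U$ is isometric on all of $H$, not merely on the cyclic subspace generated by $\xi$. The latter can be handled by replacing $\nu$ with an internal measure faithfully encoding the whole dense sequence $(x_n)$, producing the correct multiplicities in each fiber $H_t$, so that no information from $H$ is lost in passing through the hyperfinite eigendecomposition.
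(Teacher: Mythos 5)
Your high-level strategy --- hyperfinite compression, internal spectral theorem, Loeb pushforward along $\St(\lambda_i)$ --- matches the paper's, but two of the steps you sketch are precisely where the real work lies, and as written they would fail. You flag both as ``obstacles,'' but they are not just technical loose ends; they force a different construction.

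First, your definition of the fibers $H_t$ as nonstandard-hull spaces assembled from $\{e_i : \lambda_i \approx t\}$ is the approach the paper explicitly rejects. The monad $\{i : \lambda_i \approx t\}$ is an external set, so $\bigoplus_{\lambda_i \approx t} \mathbb{K}e_i$ is not an internal subspace and its nonstandard hull will generally be nonseparable; worse, there is no obvious way to equip $\{H_t\}_{t}$ with a measurable-field structure in the sense of Definition \ref{def:measstructuredefinition}, because there is no internal object tying the fibers together. Relatedly, your section formula $U(x)(t) = \widehat{\sum_{\lambda_i\approx t}\langle {}^*x,e_i\rangle e_i}$ lacks the density normalization $1/\sqrt{\tilde\mu(\lambda)}$ (compare $\tilde U(x)(\lambda)=\tilde\mu(\lambda)^{-1/2}\Proj_{\tilde H_\lambda}x$ in the paper), without which internal Parseval does not convert into the desired $\|x\|^2 = \int \|U(x)(t)\|^2\,d\mu(t)$. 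The paper sidesteps all of this by never constructing the fibers as hulls: it works purely at the level of the scalar functions $\lambda\mapsto\langle\tilde U(x)(\lambda),\tilde U(y)(\lambda)\rangle$, proves these are S-integrable (the central technical lemma, Theorem \ref{theorem_nearstandard_sintegrable}, absent from your sketch and genuinely necessary for the Loeb change-of-variables step you invoke), then builds concrete sections $V_j : \mathbb{R}\to\ell_2^{\mathbb{K}}$ realizing the resulting Radon--Nikodym derivatives, and defines $H_t$ as the closed span of $\{V_j(t)\}$.

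Second, the cyclic-vector measure $\nu(\{i\})=|\langle{}^*\xi,e_i\rangle|^2$ gives an isometry only on the closed cyclic subspace generated by $\xi$ (and, downstream, multiplicity one in every fiber), so it cannot prove the theorem for general $A$. You note this can be fixed by ``an internal measure faithfully encoding the whole dense sequence,'' but that fix is exactly the content of the standard-biased scale in the paper: one needs weights $\tilde c_j$ with $\sum_{j=1}^N \tilde c_j\|\tilde e_j\|^2 = 1$ \emph{and} $\sum_{j\in\mathbb N}\St(\tilde c_j)\|\St(\tilde e_j)\|^2 = 1$, and it is this second, ``standard-biased'' condition that makes the Loeb pushforward along $\St$ into a probability measure (Proposition \ref{proposition_almostall_eigenvalues_finite}); with a single $\xi$ this is automatic, but with infinitely many generators it has to be arranged. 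So while your outline identifies the correct difficulties, it leaves unaddressed the two structural devices --- S-integrability of the inner-product functions and the concrete $\ell_2$-valued section construction --- that the paper uses to actually overcome them.
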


As a byproduct of our analysis, in Section 6, we will see that if $A$ is self-adjoint, then the isometry $U$ that we construct will be surjective (and thus unitary).

\subsection{Overview of the proof}

Unfortunately, the quality of a nonstandard proof is often judged by how much shorter it is than the standard argument.  The reader of this paper might look at its length and judge that the nonstandard proof is longer than its standard counterpart!  In our opinion, the correct yardstick for measuring the quality of a nonstandard argument is the clarity of the structure of the proof and of its pedagogical advantage over the standard argument.  In the case of the current paper, we believe that this is certainly true for the proof of the direct integral spectral theorem being presented here.  In this subsection, we offer a high-level overview of the proof.  The rest of the paper can be viewed as simply checking that the details and calculations are as expected.

As with many nonstandard arguments in functional analysis, the basic idea is to ``model'' the situation by a hyperfinite-dimensional subspace of the nonstandard extension.  In our case, we should consider a hyperfinite-dimensional subspace $\tilde{H}$ of ${}^*H$ and an internal symmetric (equiv. self-adjoint) operator $\tilde{A}:\tilde{H}\to \tilde{H}$ which sufficiently approximates $A$ in a certain sense.  While our proof is carried out under much more general assumptions (see the definitions in the next section), the most natural thing to do is to start with an orthonormal basis $(e_j)_{j\in \mathbb{N}}$ for $H$ consisting of vectors from $\Dom(A)$, extend it to a hyperfinite sequence $(e_j)_{j=1}^N$ from ${}^*H$, let $\tilde{H}$ denote the internal span of this hyperfinite sequence, and let $\tilde{A}$ be $\Proj_{\tilde{H}}\circ A|_{\tilde{H}}$, the composition of the restriction of (the nonstandard extension of) $A$ to $\tilde{H}$ with the (internal) orthogonal projection operator onto $\tilde{H}$.

The main idea behind the proof is to apply the (transferred version of the) finite-dimensional version of the spectral theorem to $(\tilde{H},\tilde{A})$.  As such, we have that $\tilde{H}$ is the (internal) direct sum of its eigenspaces $\tilde{H}_\lambda$ corresponding to the eigenvalues $\lambda \in {}^*\mathbb{R}$.  One may consider any weights $c_j\in {}^*\mathbb{R}_{\geq 0}$ for $j\in [N]$ and define an internal measure on the set $\tilde{\sigma}$ of eigenvalues by declaring $\tilde{\mu}(\{\lambda\}):=\sum_{j=1}^N c_j\|\Proj_{H_\lambda}e_j\|^2$; for this measure to be a probability measure, we further require $\sum_{j=1}^N c_j\|e_j\|^2=1$.  The map $$x\mapsto (\lambda\mapsto \frac{1}{\sqrt{\tilde{\mu}}(\lambda)}\Proj_{\tilde{H}_\lambda}x)$$ is an internal unitary map between $\tilde{H}$ and the internal direct integral $\int^\oplus_{\tilde{\sigma}} \tilde{H}_\lambda d\tilde{\mu}(\lambda)$ which intertwines $\tilde{A}$ and the canonical multiplication operator on the internal direct integral.

In order to obtain the statement of Theorem \ref{theorem_spectral_direct}, one somehow needs to take the ``standard part'' of the picture from the previous paragraph.  One immediate observation is that, for one to not lose too much information from such a procedure, one would want that almost all (with respect to the Loeb measure $\mu_L$ induced by $\tilde{\mu}$) eigenvalues are finite.  This can be achieved with one further condition on the weights $(c_j)_{j\in [N]}$, namely that $\sum_{j\in \mathbb{N}}\St(c_j)=1$.  One can then pushforward $\mu_L$ along the standard part map to obtain a Borel probability measure $\mu$ on $\mathbb{R}$.

In what sense should one take the ``standard part'' of the internal direct integral?  The mindset taken in this paper is that the information carried by the integral direct integral is contained in the inner product function $\lambda\mapsto \langle\tilde{U}(x)(\lambda),\tilde{U}(y)(\lambda)\rangle$, where, for $x\in H$, one defines $\tilde{U}(x)\in \prod_\lambda \tilde{H}_\lambda$ by $\tilde{U}(x)(\lambda):=\frac{1}{\sqrt{\tilde{\mu}}(\lambda)}\Proj_{\tilde{H}_\lambda}x$.  An important technical result (Theorem \ref{theorem_nearstandard_sintegrable} below) is that this function is an \emph{S-integrable function}.  As a result, we are entitled to consider the $\mu_L$-integrable function $f^{x,y}(\lambda):=\St \langle \tilde{U}(x)(\lambda),\tilde{U}(y)(\lambda)\rangle$.  We can use $f^{x,y}$ to define a measure $f^{x,y}d\mu_L$ on $\tilde{\sigma}$, which we can then push forward along the standard part map to a Borel probability measure $\nu^{x,y}$ on $\mathbb{R}$.  It is in this sense that we take the standard part of the internal direct integral.

So how does one obtain the conclusion of Theorem \ref{theorem_spectral_direct} from this family of measures?  The idea is to reverse the steps of the previous paragraph.  Indeed, for each $j,l\in \mathbb{N}$, one can consider the Radon-Nikodym derivative $U_{j,l}:=\frac{d\nu^{e_j,e_l}}{d\mu}$ and then show that there is a sequence $(V_j:\mathbb{R}\to \ell_2)_{j\in \mathbb{N}}$ of measurable functions for which $\langle V_j(t),V_l(t)\rangle=U_{j,l}(t)$ for $\mu$-almost all $t\in \mathbb{R}$.  This allows one to conclude that mapping $e_j$ to $V_j$ extends to an isometry $U:H\to \int^{\oplus}_{\mathbb{R}}\ell_2^{\mathbb{K}}d\mu$.  For the purposes of obtaining a unitary operator in case $A$ is self-adjoint, it is better to view $U$ as taking its values in $\int^\oplus_{\mathbb{R}} H_td\mu$, where $H_t$ is the subspace of $\ell_2^{\mathbb{K}}$ spanned by $\{V_j(t) \ : \ j\in \mathbb{N}\}$, and where $(H_t)_{t\in \mathbb{R}}$ is endowed with its ``induced'' measurable structure.\footnote{A short argument is needed to show that this makes sense.}

It is straightforward to show that the standard part of the graph of $\tilde{A}$ is the graph of a closed symmetric operator $\hat A$ extending $A$.  In Section 6, we show that our construction is robust enough so that $\hat A$ is self-adjoint if and only if $U$ is unitary.

One point worth making is that our strategy holds irrespective of whether the Hilbert space is a real Hilbert space or a complex Hilbert space.  The usual approach to the direct integral version of the spectral theorem for unbounded self-adjoint operators uses the Cayley transform to reduce to the case of bounded unitary operators, and such a reduction is only possible when the field of scalars is the complex field, and when the operator is self-adjoint (see, for example, \cite[Chapter X]{conway2019course} or \cite[Chapter 10]{hall2013quantum}). We also note that our proof relies only on the spectral theorem for self-adjoint operators on finite-dimensional Hilbert spaces, which is much easier to prove than any of its infinite-dimensional counterparts.  

The methods use in this paper are partly inspired by those used in \cite{nonez2024spectralequivalencesnonstandardsamplings}, and share apparent similarities, notably when it comes to samplings and scales. In that body of work, the flexibility afforded by these objects are further used to work with specific examples of symmetric operators, such as $-i\frac{d}{dx}$ on $C_c^{\infty}(\mathbb{R})$, to establish explicit isometries (in this case, the resulting isometry being the Fourier transform). We think such applications might be possible in our context as well, helping one find clear descriptions of the measurable families for a given symmetric operator. In that case, such descriptions would most likely provide a more complete picture of "eigenvalue" multiplicity.  We leave this possibility for a future project. 

Finally, we mention that our methods quite easily yield yet another nonstandard proof of the projection-valued measure version of the Spectral Theorem, first established in \cite{GOLDBRING2021590} and then again in \cite{nonez2024spectralequivalencesnonstandardsamplings} (see also \cite{matsunaga2024shortnonstandardproofspectral}).  This derivation is given in Section 7.

\subsection{Conventions}

Throughout this paper, we fix a separable Hilbert space $H$ over the field $\mathbb{K}\in \{\mathbb{R},\mathbb{C}\}$ and a symmetric unbounded operator $A$ on $H$, which we assume is densely defined. The inner product of $x,y\in H$, which is linear in $x$ (thus conjugate linear in $y$), will be denoted with $\langle x,y\rangle.$ Meanwhile, the ordered pair from $H\times H$ will be denoted $(x,y).$ 

If $\{H_x\}_{x\in M}$ is a family of $\mathbb{K}$-Hilbert spaces, then given $X,Y\in\prod_{x\in M}H_x,$ we let $\langle X,Y\rangle$ denote the map $x\rightarrow \langle X(x), Y(x)\rangle:M\rightarrow\mathbb{K}.$ In contrast, if $\mu$ is a measure on some $\sigma$-algebra on $M$ for which the function $\langle X, Y\rangle$ is $\mu$-integrable, then we set $\langle X,Y\rangle_{\mu}=\int_M\langle X, Y\rangle d\mu.$ Similarly, $\|X\|$ is the map $x\rightarrow \|X(x)\|,$ while $\|X\|_{\mu}=\sqrt{\langle X,X\rangle_{\mu}}=\sqrt{\int_M \|X\|^2d\mu}$ when applicable.

We assume that our reader is familiar with the basics of nonstandard analysis.  The reader unfamiliar with these methods may consult \cite{nsarecent}.  The only somewhat nonstandard nonstandard ingredient\footnote{Pun intended.} in our work is the notion of an S-integrable function; we include the basic information about this notion when it is first needed in Section 4.  

As is customary, we assume our nonstandard extension is $\aleph_1$-saturated; this suffices since our Hilbert space is separable.

In order to prevent the paper from becoming cluttered with too many stars, we omit stars on the nonstandard extensions of functions.  Similarly, when considering an internal function on an internal measure space, we write $\int fd\mu$ rather than ${}^*\int fd\mu$ for the internal integral.  Another instance of this convention is that, for an internal subspace $\tilde{H}$ of ${}^*H$, we write $\Proj_{\tilde{H}}$ for the internal operator on ${}^*H$ which projects onto $\tilde{H}$.

Finally, we assume that the natural numbers $\mathbb{N}$ begin at $1$.  When referring to the set of nonnegative integers, we write $\mathbb{Z}_{\geq 0}$.
\section{Quasi-samplings}
Our definitions in this section are heavily inspired by the notion of sampling introduced in \cite{nonez2024spectralequivalencesnonstandardsamplings}. However, not all of the structure of samplings is needed for our current purposes and  so we define the following weaker notion:
\begin{defn}\label{definition_sampling}
	The pair $(\tilde{H}, \tilde{A})$ is called a \textbf{quasi-sampling} for $A$ if:
	\begin{enumerate}
		\item\label{sampling_property_hyperfinite_space} $\tilde{H}$ is a hyperfinite-dimensional subspace of ${}^*H$; 
		\item\label{sampling_property_symmetric_map} $\tilde{A}:\tilde{H}\rightarrow\tilde{H}$ is an internal symmetric operator;
		\item\label{sampling_property_approximation} We have the graph relation $G(A)\subseteq\St(G(\tilde{A})).$ In other words, for each $x\in\Dom(A)$, there exists $\bar{x}\in\tilde{H}$ such that $x=\St(\bar{x})$ and $Ax=\St(\tilde{A}\bar{x}).$ 
		\setcounter{pointnumber}{\value{enumi}}
	\end{enumerate}
\end{defn}

We next recall the definition of a standard-biased scale from \cite[Definition 2.4]{nonez2024spectralequivalencesnonstandardsamplings}.  We also take the opportunity to say what it means for such a scale to be compatible with a quasi-sampling.
\begin{defn}\label{definition_scale}
	If $(\tilde{e}_j)_{j=1}^N$ is a hyperfinite sequence from ${}^*H$, and $(\tilde{c}_j)_{j=1}^N$ is a hyperfinite sequence from ${}^*\mathbb{R}_{>0}$, then the pair $\left((\tilde{e}_j)_{j=1}^N,(\tilde{c}_j)_{j=1}^N\right)$ is called a \textbf{standard-biased scale} if:
	\begin{enumerate}
		\setcounter{enumi}{\value{pointnumber}}
		\item\label{scale_property_nearstandard} for each $j\in\mathbb{N}$, $\tilde{e}_j$ is nearstandard in $H$, $\tilde{c}_j$ is near standard in $\mathbb{R}$, and $\St(\tilde{c}_j\tilde{e}_j)\neq 0$.
		\item\label{scale_property_density} the set $\{\St(\tilde{e}_j)\}_{j\in\mathbb{N}}$ is dense-spanning in $H$.
		\item\label{scale_property_probability} $\sum_{j=1}^{N}\tilde{c}_j\|\tilde{e}_j\|^2=1.$
		\item \label{scale_property_standard_bias}$\sum_{j\in\mathbb{N}}\St(\tilde{c}_j)\|\St(\tilde{e}_j )\|^2=1.$
		\setcounter{pointnumber}{\value{enumi}}
	\end{enumerate}
	Furthermore, we say that this scale is compatible with the quasi-sampling $(\tilde{H},\tilde{A})$ if:
	\begin{enumerate}
		\setcounter{enumi}{\value{pointnumber}}
		\item\label{scale_compat_property_H} for each $j\in[N]$, $\tilde{e}_j\in\tilde{H}$.
		\item\label{scale_compat_property_A} for each $j\in\mathbb{N}$, $\|\tilde{A}\tilde{e}_j\|$ is finite. 
	\end{enumerate}
\end{defn}

The results of \cite[Section 2]{nonez2024spectralequivalencesnonstandardsamplings} show that quasi-samplings for $A$ and standard-biased scales compatible with a given quasi-sampling exist.  We provide one such construction here, constructing the quasi-sampling and scale at the same time; the details are left to the reader (or they may consult \cite[Section 2]{nonez2024spectralequivalencesnonstandardsamplings}). 

Let $(e_j)_{j\in \mathbb{N}}$ be an orthonormal subset of $\Dom(A)$ such that $\{(e_j, Ae_j)\}_{j\in\mathbb{N}}$ is dense-spanning in $G(A)$.  Such a set must necessarily be an orthonormal basis of $H$. Then, for some infinite hypernatural $N,$ let $\tilde{H}$ be the internal subspace of ${}^*H$ spanned by $({}^*e_j)_{j=1}^N$ and set $\tilde{A}=(\Proj_{\tilde{H}}\circ {}^*A)|_{\tilde{H}}.$ Finally, for $j\in [N]$, set $\tilde{e}_j={}^*e_j$ and $\tilde{c}_j:=\frac{1}{2^j(1-2^{-N})}.$ Then it is straightforward to verify that $(\tilde{H}, \tilde{A})$ is a quasi-sampling for which $((\tilde{e}_j)_{j=1}^N,(\tilde{c}_j)_{j=1}^N)$ is a compatible standard-biased scale.




\begin{conv}
    	For the remainder of this paper, we fix a quasi-sampling $(\tilde{H},\tilde{A})$ for $A$ as well as a standard-biased scale $\left((\tilde{e}_j)_{j=1}^N,(\tilde{c}_j)_{j=1}^N\right)$ compatible with $(\tilde{H},\tilde{A})$. Furthermore, for $j\in\mathbb{N}$, we set $c_j:=\St(\tilde{c}_j)$ and $e_j:=\St(\tilde{e}_j).$
\end{conv}

\section{The induced measure}
We begin with a few preliminary definitions.
\begin{defn}\label{definition_internal_eigenspaces}
	For each $\lambda\in {}^*\mathbb{R}$, we set $\tilde{H}_{\lambda}=\ker(\lambda I_{\tilde{H}}-\tilde{A})$, while for internal $V\subseteq {}^*\mathbb{R},$ we set $\tilde{H}_V={}^*\bigoplus_{\lambda\in V} \tilde{H}_{\lambda}$ (so $\tilde{H}_\lambda=\tilde{H}_{\{\lambda\}}$). 
\end{defn}

\begin{rmks}\label{remark_internal_eigenspaces}

\

\begin{enumerate}
    \item   Since $\tilde{A}$ is symmetric, we have that all $\tilde{H}_{\lambda}$ are orthogonal to each other. 
    \item Since $\tilde{A}$ is symmetric, we have $\Proj_{\tilde{H}_{\lambda}}\tilde{A}x=\tilde{A}\Proj_{\tilde{H}_{\lambda}}x$ for all $x\in\tilde{H}$.
    \item Since $\tilde{H}$ is hyperfinite-dimensional, we have $\{\lambda\in{}^*\mathbb{R}\;|\; \tilde{H}_{\lambda}\neq (0) \}$ is hyperfinite. 
    \item By the Transfer Principle and the finite-dimensional version of the Spectral Theorem, we have $\tilde{H}=\tilde{H}_{{}^*\mathbb{R}}$.
    \end{enumerate}
\end{rmks}

\begin{defn}
	Let $\tilde{\sigma}= \{\lambda\in{}^*\mathbb{R}\;|\; \{\tilde{e}_j\}_{j=1}^N\not\subseteq \tilde{H}_{\lambda}^{\perp} \}$. In other words, $\lambda\in\tilde{\sigma}$ if and only if there exists $j\in [N]$ for which ${}^*\Proj_{\tilde{H}_{\lambda}}\tilde{e}_j\neq 0$.
\end{defn}
\begin{rmk}\label{remark_sigma_space_density}
	Since $\lambda\in\tilde{\sigma}$ implies $\tilde{H}_{\lambda}\neq(0)$, we have that $\tilde{\sigma}$ is hyperfinite. 
\end{rmk}
\begin{prop}\label{proposition_internal_spectrum_compat_scale}
	For each $j\in[N]$, we have $\tilde{e}_j\in\tilde{H}_{\tilde{\sigma}}.$
\end{prop}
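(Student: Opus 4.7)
The plan is to prove this by direct decomposition using the transferred finite-dimensional spectral theorem.

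First, I would recall that by Remarks~\ref{remark_internal_eigenspaces}(3) and (4), the internal set $S := \{\lambda \in {}^*\mathbb{R} : \tilde{H}_\lambda \neq 0\}$ is hyperfinite, and $\tilde{H} = {}^*\!\bigoplus_{\lambda \in S} \tilde{H}_\lambda$ as an internal orthogonal decomposition (using Remark~\ref{remark_internal_eigenspaces}(1)). Consequently, by the Transfer Principle applied to the finite-dimensional fact that a vector equals the sum of its projections onto the pairwise-orthogonal summands, every $\tilde{e}_j$ (for $j \in [N]$) admits the internal decomposition
\[
\tilde{e}_j \;=\; \sum_{\lambda \in S} \Proj_{\tilde{H}_\lambda}\tilde{e}_j.
\]
Note that $\tilde{e}_j \in \tilde{H}$ by compatibility property \eqref{scale_compat_property_H}, so this decomposition makes sense.

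Next, I would observe that $\tilde{\sigma} \subseteq S$: indeed, if $\lambda \in \tilde{\sigma}$, then there exists $j \in [N]$ with $\Proj_{\tilde{H}_\lambda}\tilde{e}_j \neq 0$, which forces $\tilde{H}_\lambda \neq 0$. On the other hand, if $\lambda \in S \setminus \tilde{\sigma}$, then by the contrapositive of the defining property of $\tilde{\sigma}$, we have $\{\tilde{e}_j\}_{j=1}^N \subseteq \tilde{H}_\lambda^\perp$, so $\Proj_{\tilde{H}_\lambda}\tilde{e}_j = 0$ for every $j \in [N]$. Substituting into the decomposition above gives
\[
\tilde{e}_j \;=\; \sum_{\lambda \in \tilde{\sigma}} \Proj_{\tilde{H}_\lambda}\tilde{e}_j \;\in\; \tilde{H}_{\tilde{\sigma}},
\]
as desired.

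There isn't really a main obstacle here; the proposition is essentially a bookkeeping consequence of the transferred finite-dimensional spectral theorem together with the definition of $\tilde{\sigma}$. The only subtlety worth mentioning is the need to invoke Transfer to justify the orthogonal direct sum decomposition for the hyperfinite (rather than finite) index set $S$, but this is immediate from Remarks~\ref{remark_internal_eigenspaces}(3)--(4).
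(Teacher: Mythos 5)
Your proof is correct and takes essentially the same approach as the paper: both arguments rest on the orthogonal eigenspace decomposition of $\tilde{H}$ together with the observation that, by the definition of $\tilde{\sigma}$, each $\tilde{e}_j$ is orthogonal to every $\tilde{H}_\lambda$ with $\lambda \notin \tilde{\sigma}$. The only cosmetic difference is that you expand $\tilde{e}_j$ explicitly as a sum of projections and note that the terms indexed outside $\tilde{\sigma}$ vanish, whereas the paper decomposes the space as $\tilde{H}=\tilde{H}_{\tilde{\sigma}}\oplus\tilde{H}_{{}^*\mathbb{R}\setminus\tilde{\sigma}}$ and argues $\tilde{e}_j\perp\tilde{H}_{{}^*\mathbb{R}\setminus\tilde{\sigma}}$; these are the same computation in two presentations.
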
	
\begin{proof}
By Remark \ref{remark_internal_eigenspaces}, we have that $\tilde{H}=\tilde{H}_{\tilde{\sigma}}\oplus\tilde{H}_{{}^*\mathbb{R}\setminus\tilde{\sigma}}$. Since Property \ref{scale_compat_property_H} of Definition \ref{definition_scale} establishes that $\tilde{e}_j\in\tilde{H}$, we only need to show that $\tilde{e}_j\perp\tilde{H}_{{}^*\mathbb{R}\setminus\tilde{\sigma}} .$ However, since the sum in Definition \ref{definition_internal_eigenspaces} is (internally) algebraic, and since by definition $\tilde{e}_j\in \tilde{H}_{\lambda}^\perp$ for each $\lambda\in{}^*\mathbb{R}\setminus\tilde{\sigma},$ we indeed have $\tilde{e}_j\perp\tilde{H}_{{}^*\mathbb{R}\setminus\tilde{\sigma}} .$
\end{proof}

\begin{rmk}
In the construction of a quasi-sampling and standard-biased scale given in the previous section, we have that $\tilde{\sigma}$ is simply the collection of eigenvalues of $\tilde{A}$ and that $\tilde{H}_{\tilde{\sigma}}=\tilde{H}$.  The reader will lose nothing moving forward if they keep this special case in mind. 
\end{rmk}

We now work towards defining our main measure.

\begin{defn}
	Let $\tilde{\mathcal{A}}={}^*\mathcal{P}(\tilde{\sigma})$ be the internal algebra of internal subsets of $\tilde{\sigma}$ and define $\tilde{\mu}:\tilde{\mathcal{A}}\rightarrow{}^*\mathbb{R}_{\geq0}$ by
	$$\tilde{\mu}(V):=\sum_{j=1}^N \tilde{c}_j\|{}^*\Proj_{\tilde{H}_V}\tilde{e}_j\|^2. $$
For $\lambda\in\tilde{\sigma}$, we set $\tilde{\mu}(\lambda):=\tilde{\mu}(\{\lambda\})=\sum_{j=1}^N\tilde{c}_j\|{}^*\Proj_{\tilde{H}_{\lambda}}\tilde{e}_j\|^2.$
\end{defn}
\begin{rmk}
	By the definition of $\tilde{\sigma}$, we have that $\tilde{\mu}(\lambda)>0$ for every $\lambda\in\tilde{\sigma}$. 
\end{rmk}

By Remark \ref{remark_internal_eigenspaces}, Proposition \ref{proposition_internal_spectrum_compat_scale}, and Property \ref{scale_property_probability} of Definition \ref{definition_scale}, we directly conclude the following:

\begin{prop}
	$(\tilde{\sigma},\tilde{\mathcal{A}},\tilde{\mu})$ is an internal probability space.
\end{prop}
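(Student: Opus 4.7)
The plan is to verify three properties of $(\tilde{\sigma}, \tilde{\mathcal{A}}, \tilde{\mu})$: that $\tilde{\mathcal{A}}$ is internally a $\sigma$-algebra on $\tilde{\sigma}$, that $\tilde{\mu}$ is an internal nonnegative measure on $\tilde{\mathcal{A}}$, and that $\tilde{\mu}(\tilde{\sigma}) = 1$. The first is immediate from the definition $\tilde{\mathcal{A}} = {}^*\mathcal{P}(\tilde{\sigma})$, together with the fact (Remark \ref{remark_sigma_space_density}) that $\tilde{\sigma}$ is hyperfinite, so internally the power set trivially satisfies the $\sigma$-algebra axioms by transfer.

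For the second property, since $\tilde{\sigma}$ is hyperfinite, by transfer it suffices to check internal finite additivity, and in fact just additivity for two disjoint internal sets $V_1, V_2 \subseteq \tilde{\sigma}$. The key input here is Remark \ref{remark_internal_eigenspaces}(1): the eigenspaces $\tilde{H}_\lambda$ for distinct $\lambda$ are mutually orthogonal, so the internal direct sums $\tilde{H}_{V_1}$ and $\tilde{H}_{V_2}$ are orthogonal subspaces whose internal orthogonal sum is $\tilde{H}_{V_1 \cup V_2}$. Applying (the transferred) Pythagorean theorem to the projection of each $\tilde{e}_j$ onto these subspaces yields
$$\|\Proj_{\tilde{H}_{V_1 \cup V_2}} \tilde{e}_j\|^2 = \|\Proj_{\tilde{H}_{V_1}} \tilde{e}_j\|^2 + \|\Proj_{\tilde{H}_{V_2}} \tilde{e}_j\|^2,$$
for every $j \in [N]$, and then a hyperfinite sum against the weights $\tilde{c}_j$ gives $\tilde{\mu}(V_1 \cup V_2) = \tilde{\mu}(V_1) + \tilde{\mu}(V_2)$. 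Clearly $\tilde{\mu}(\emptyset) = 0$.

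For the third property, I would invoke Proposition \ref{proposition_internal_spectrum_compat_scale}, which asserts $\tilde{e}_j \in \tilde{H}_{\tilde{\sigma}}$ for every $j \in [N]$. This implies ${}^*\Proj_{\tilde{H}_{\tilde{\sigma}}}\tilde{e}_j = \tilde{e}_j$, so
$$\tilde{\mu}(\tilde{\sigma}) = \sum_{j=1}^N \tilde{c}_j \|\tilde{e}_j\|^2 = 1$$
by Property \ref{scale_property_probability} of Definition \ref{definition_scale}.

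There is no genuine obstacle here; the proposition is essentially a bookkeeping consequence of the orthogonality of eigenspaces for a symmetric internal operator on a hyperfinite-dimensional space (guaranteeing additivity) together with the normalization baked into the standard-biased scale (guaranteeing total mass one). The only mild point of care is writing out the additivity argument via the internal Pythagorean identity rather than trying to invoke orthogonal direct sum decompositions informally.
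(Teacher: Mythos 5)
Your proof is correct and matches the paper's approach exactly; the paper gives only a one-line citation of Remark \ref{remark_internal_eigenspaces}, Proposition \ref{proposition_internal_spectrum_compat_scale}, and Property \ref{scale_property_probability}, and you have simply spelled out how those three ingredients yield, respectively, additivity via orthogonality of the eigenspaces and total mass one via the normalization.
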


\begin{defn}
	Let $(\tilde{\sigma},\mathcal{A}_L,\mu_L)$ be the Loeb probability space associated to $(\tilde{\sigma},\tilde{\mathcal{A}},\tilde{\mu})$. 
\end{defn}

The goal is to push forward $\mu_L$ via the standard part map to a measure on $\mathbb{R}$. In order to do so, we need the following proposition:

\begin{prop}\label{proposition_almostall_eigenvalues_finite}
	$\operatorname{Fin}({}^*\mathbb{R})\cap\tilde{\sigma}\in\mathcal{A}_L$ and $\mu_L(\tilde{\sigma}\setminus\operatorname{Fin}({}^*\mathbb{R}) )=0$.
\end{prop}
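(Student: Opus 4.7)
My plan is to establish the measurability claim easily, then reduce the measure-zero claim to a Chebyshev-type estimate whose main subtlety is controlled by the standard-bias property. For measurability, I observe that
$$\operatorname{Fin}({}^*\mathbb{R})\cap\tilde{\sigma}=\bigcup_{n\in\mathbb{N}}\{\lambda\in\tilde{\sigma}:|\lambda|\leq n\}$$
is a countable union of internal subsets of $\tilde{\sigma}$, hence lies in $\mathcal{A}_L$; its complement $\tilde{\sigma}\setminus\operatorname{Fin}({}^*\mathbb{R})$ is therefore also Loeb measurable.

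For the measure-zero statement, set $B_n:=\{\lambda\in\tilde{\sigma}:|\lambda|>n\}$ for $n\in\mathbb{N}$. Since $\tilde{\sigma}\setminus\operatorname{Fin}({}^*\mathbb{R})=\bigcap_n B_n$ and the $B_n$ are nested, continuity of measure reduces the claim to $\lim_{n\to\infty}\mu_L(B_n)=0$. The key ingredient is a Chebyshev-type bound. By Proposition~\ref{proposition_internal_spectrum_compat_scale} each $\tilde{e}_j\in\tilde{H}_{\tilde{\sigma}}$, so orthogonality of the $\tilde{H}_\lambda$ together with $\tilde{A}|_{\tilde{H}_\lambda}=\lambda\cdot I$ gives
$$\|\tilde{A}\tilde{e}_j\|^2=\sum_{\lambda\in\tilde{\sigma}}\lambda^2\|\Proj_{\tilde{H}_\lambda}\tilde{e}_j\|^2,$$
and hence $\|\Proj_{\tilde{H}_{B_n}}\tilde{e}_j\|^2\leq n^{-2}\|\tilde{A}\tilde{e}_j\|^2$ for every $j\in[N]$.

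Plugging this bound directly into $\tilde{\mu}(B_n)=\sum_{j=1}^N\tilde{c}_j\|\Proj_{\tilde{H}_{B_n}}\tilde{e}_j\|^2$ is not enough: property~(\ref{scale_compat_property_A}) only guarantees that $\|\tilde{A}\tilde{e}_j\|$ is finite for \emph{standard} $j$. The workaround, and the main obstacle, is to use the standard-bias property~(\ref{scale_property_standard_bias}) to absorb the tail of the $j$-sum. Given $\varepsilon>0$, I choose $M\in\mathbb{N}$ with $\sum_{j=1}^M c_j\|e_j\|^2>1-\varepsilon$; combined with property~(\ref{scale_property_probability}) and near-standardness of $\tilde{c}_j\|\tilde{e}_j\|^2$ for $j\leq M$, this forces $\sum_{j=M+1}^N\tilde{c}_j\|\tilde{e}_j\|^2\leq\varepsilon+\eta$ for some infinitesimal $\eta$. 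Splitting
$$\tilde{\mu}(B_n)=\sum_{j=1}^M\tilde{c}_j\|\Proj_{\tilde{H}_{B_n}}\tilde{e}_j\|^2+\sum_{j=M+1}^N\tilde{c}_j\|\Proj_{\tilde{H}_{B_n}}\tilde{e}_j\|^2,$$
the Chebyshev estimate bounds the first sum by $n^{-2}\sum_{j=1}^M\tilde{c}_j\|\tilde{A}\tilde{e}_j\|^2$, a finite quantity by properties~(\ref{scale_property_nearstandard}) and~(\ref{scale_compat_property_A}), while the second sum is bounded by $\varepsilon+\eta$. Taking standard parts yields $\mu_L(B_n)\leq\varepsilon+C_M/n^2$ for a finite $C_M$; letting $n\to\infty$ gives $\limsup_n\mu_L(B_n)\leq\varepsilon$, and arbitrariness of $\varepsilon$ finishes the proof.
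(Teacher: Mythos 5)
Your proof is correct and follows essentially the same route as the paper: the same measurability argument via countable union of internal sets, the same Chebyshev estimate $\|\Proj_{\tilde{H}_{B_n}}\tilde{e}_j\|^2\leq n^{-2}\|\tilde{A}\tilde{e}_j\|^2$ derived by commuting $\tilde{A}$ with the spectral projections, and the same splitting of the $j$-sum at a finite index (your $M$, the paper's $j_0$) where the standard-bias property controls the tail and finiteness of $\|\tilde{A}\tilde{e}_j\|$ for standard $j$ controls the head, followed by $n\to\infty$ then $\varepsilon\to 0$. The only cosmetic difference is that you invoke continuity of measure to reduce to $\lim_n\mu_L(B_n)=0$, whereas the paper bounds $\mu_L(\tilde{\sigma}\setminus\operatorname{Fin}(\tilde{\sigma}))$ by $\St(\tilde{\mu}(\tilde{\sigma}\setminus{}^*[-n,n]))$ directly.
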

\begin{proof}
	Set $\operatorname{Fin}(\tilde{\sigma})=\operatorname{Fin}({}^*\mathbb{R})\cap\tilde{\sigma}.$ We have $\operatorname{Fin}(\tilde{\sigma})=\bigcup_{n\in\mathbb{N}}(\tilde{\sigma}\cap{}^*[-n,n])$. Since all the sets in the union are in $\tilde{\mathcal{A}}\subset\mathcal{A}_L$, we have that  $\operatorname{Fin}(\tilde{\sigma})\in\mathcal{A}_L$, proving the first statement of the proposition. Note also that $$\mu_L(\tilde{\sigma}\setminus \operatorname{Fin}(\tilde{\sigma}))\leq\St(\tilde{\mu}(\tilde{\sigma}\setminus{}^*[-n,n]))$$ for any $n\in \mathbb{N}$. 
	
	Fix $\epsilon\in\mathbb{R}_{>0}$. Using Properties \ref{scale_property_probability} and \ref{scale_property_standard_bias} of Definition \ref{definition_scale}, we have $$\lim_{n\rightarrow\infty}\St\left(\sum_{j=n+1}^{N}\tilde{c}_j\|\tilde{e}_j\|^2\right)=1-\lim_{n\rightarrow\infty}\St\left(\sum_{j=1}^n\tilde{c}_j\|\tilde{e}_j\|^2\right)=0. $$ Thus, we may take $j_0\in\mathbb{N}$ such that $$\St\left(\sum_{j=j_0+1}^N\tilde{c}_j\|\tilde{e}_j\|^2\right)<\epsilon.$$ Then, for any $n\in\mathbb{N}$, we have, using Remark \ref{remark_internal_eigenspaces} to commute $\tilde{A}$ and $\Proj_{\tilde{H}_{\lambda}}$,
	\begin{align*}
		\tilde{\mu}(\tilde{\sigma}\setminus{}^*[-n,n])&=\sum_{j=1}^N\tilde{c}_j\sum_{\lambda\in\tilde{\sigma}\setminus{}^*[-n,n]}\|\Proj_{\tilde{H}_{\lambda}}\tilde{e}_j\|^2\\
		&=\sum_{j=1}^{j_0}\tilde{c}_j\sum_{\lambda\in\tilde{\sigma}\setminus{}^*[-n,n]}\|\Proj_{\tilde{H}_{\lambda}}\tilde{e}_j\|^2+\sum_{j=j_0+1}^N\tilde{c}_j\|\Proj_{\tilde{H}_{\tilde{\sigma}\setminus{}^*[-n,n]}}\tilde{e}_j\|^2\\
		&\leq \sum_{j=1}^{j_0}\tilde{c}_j\sum_{\lambda\in\tilde{\sigma}\setminus{}^*[-n,n]}\frac{\lambda^2}{n^2}\|\Proj_{\tilde{H}_{\lambda}}\tilde{e}_j\|^2+\sum_{j=j_0+1}^N\tilde{c}_j\|\tilde{e}_j\|^2\\ &\leq\epsilon+\sum_{j=1}^{j_0}\tilde{c}_j\sum_{\lambda\in\tilde{\sigma}\setminus{}^*[-n,n]}\frac{\lambda^2}{n^2}\|\Proj_{\tilde{H}_{\lambda}}\tilde{e}_j\|^2\\
		&=\epsilon+\frac{1}{n^2}\sum_{j=1}^{j_0}\tilde{c}_j\sum_{\lambda\in\tilde{\sigma}\setminus{}^*[-n,n]}\|\lambda \Proj_{\tilde{H}_{\lambda}}\tilde{e}_j\|^2\\
		&=\epsilon+\frac{1}{n^2}\sum_{j=1}^{j_0}\tilde{c}_j\sum_{\lambda\in\tilde{\sigma}\setminus{}^*[-n,n]}\|\tilde{A} \Proj_{\tilde{H}_{\lambda}}\tilde{e}_j\|^2\\
		&=\epsilon+\frac{1}{n^2}\sum_{j=1}^{j_0}\tilde{c}_j\sum_{\lambda\in\tilde{\sigma}\setminus{}^*[-n,n]}\|\Proj_{\tilde{H}_{\lambda}}\tilde{A}\tilde{e}_j\|^2\\
		&=\epsilon+\frac{1}{n^2}\sum_{j=1}^{j_0}\tilde{c}_j\|\Proj_{\tilde{H}_{\tilde{\sigma}\setminus{}^*[-n,n]}}\tilde{A}\tilde{e}_j\|^2\\
		&\leq \epsilon+\frac{1}{n^2}\sum_{j=1}^{j_0}\tilde{c}_j\|\tilde{A}\tilde{e}_j\|^2.
	\end{align*}
By Property \ref{scale_compat_property_A} of Definition \ref{definition_scale} and our earlier observation, for any $n\in\mathbb{N}$ we deduce that  $$\mu_L(\tilde{\sigma}\setminus \operatorname{Fin}(\tilde{\sigma}))\leq\epsilon+\frac{1}{n^2}\sum_{j=1}^{j_0}c_j\St(\|\tilde{A}\tilde{e}_j\|)^2.$$
Letting $n\to \infty$, we have that $\mu_L(\tilde{\sigma}\setminus \operatorname{Fin}(\tilde{\sigma}))\leq\epsilon$. Since $\epsilon\in\mathbb{R}_{>0}$ is arbitrary, we conclude that $\mu_L(\tilde{\sigma}\setminus \operatorname{Fin}(\tilde{\sigma}))=0$
\end{proof}

\begin{defn}
    Let $\St_{\tilde{\sigma}}:=\St|_{\operatorname{Fin}(\tilde{\sigma})}:\operatorname{Fin}(\tilde{\sigma})\rightarrow\mathbb{R}.$
\end{defn}
By \cite[Theorem 3.2]{Ross1997}, we have that
    the function $\St_{\tilde{\sigma}}$ is measurable.
We can now define the measure on $\mathbb{R}$ that will be part of the verification of Theorem \ref{theorem_spectral_direct}.

\begin{defn}
	Let $\mu$ be the probability measure on $\operatorname{Borel}(\mathbb{R})$ obtained by pushing forward $\mu_L$ via $\St_{\tilde{\sigma}}.$
\end{defn}

\section{Internal Isometry}
In this section, we consider properties given by the internal direct integral.

\begin{defn}
	Define  $$\tilde{U}:{}^*H\rightarrow\int_{\tilde{\sigma}}^{\oplus}\tilde{H}_{\lambda}d\tilde{\mu}(\lambda)$$ by $(\tilde{U}(x))(\lambda)=\frac{1}{\sqrt{\tilde{\mu}(\lambda)}}\Proj_{\tilde{H}_\lambda}x$.
\end{defn}
\begin{rmk}
	Since $\tilde{\sigma}$ is hyperfinite, $\tilde{\mathcal{A}}={}^*\mathcal{P}(\tilde{\sigma})$, and $\tilde{\mu}(\lambda)>0$ for all $\lambda\in\tilde{\sigma}$, we have that $\int_{\tilde{\sigma}}^{\oplus}\tilde{H}_{\lambda}d\tilde{\mu}(\lambda)=\prod_{\lambda\in\tilde{\sigma}}\tilde{H}_{\lambda}$ (as internal sets). As such, for any $x\in{}^*H,$ we have that $\lambda\rightarrow \frac{1}{\sqrt{\tilde{\mu}(\lambda)}}\Proj_{\tilde{H}_\lambda}x$ is an internal function, and so $\tilde{U}(x)\in$ does indeed belong to $\int_{\tilde{\sigma}}^{\oplus}\tilde{H}_{\lambda}d\tilde{\mu}(\lambda) $.
\end{rmk}

\begin{prop}\label{proposition_internal_isometry}
	$\tilde{U}$ is an internal linear map. Furthermore, $\ker(\tilde{U})=\tilde{H}_{\tilde{\sigma}}^\perp$ and $\tilde{U}|_{\tilde{H}_{\tilde{\sigma}}}$ is an isometry. Finally, for any $\lambda\in\tilde{\sigma}$ and $x\in\tilde{H}$, $(\tilde{U}(\tilde{A}x))(\lambda)=\lambda(\tilde{U}(x))(\lambda)$ holds.
\end{prop}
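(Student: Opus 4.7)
The plan is to verify the four assertions in order, leaning on the fact that, by Remarks \ref{remark_internal_eigenspaces}, the internal integral against $\tilde{\mu}$ is simply a hyperfinite sum indexed by $\tilde{\sigma}$, the eigenspaces $\{\tilde{H}_{\lambda}\}_{\lambda\in\tilde{\sigma}}$ are pairwise orthogonal, $\tilde{H}_{\tilde{\sigma}}={}^*\bigoplus_{\lambda\in\tilde{\sigma}}\tilde{H}_{\lambda}$, and each eigenspace is $\tilde{A}$-invariant with $\tilde{A}|_{\tilde{H}_{\lambda}}=\lambda\cdot I$.

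\textbf{Internal linearity.} For each fixed $\lambda\in\tilde{\sigma}$ the map $x\mapsto \frac{1}{\sqrt{\tilde{\mu}(\lambda)}}\Proj_{\tilde{H}_{\lambda}}x$ is a (scalar multiple of an) internal orthogonal projection, hence internal linear; internal linearity of $\tilde{U}$ follows.

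\textbf{Kernel.} I would observe that $\tilde{U}(x)=0$ if and only if $\Proj_{\tilde{H}_{\lambda}}x=0$ for every $\lambda\in\tilde{\sigma}$, which by $\tilde{H}_{\tilde{\sigma}}={}^*\bigoplus_{\lambda\in\tilde{\sigma}}\tilde{H}_{\lambda}$ is equivalent to $x\perp\tilde{H}_{\lambda}$ for every $\lambda\in\tilde{\sigma}$, i.e.\ $x\in\tilde{H}_{\tilde{\sigma}}^{\perp}$.

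\textbf{Isometry on $\tilde{H}_{\tilde{\sigma}}$.} For $x\in\tilde{H}_{\tilde{\sigma}}$, the internal integral collapses to a hyperfinite sum, and the factor $\tilde{\mu}(\lambda)$ cancels the $1/\tilde{\mu}(\lambda)$ inside the norm:
\[
\|\tilde{U}(x)\|_{\tilde{\mu}}^{2}=\sum_{\lambda\in\tilde{\sigma}}\tilde{\mu}(\lambda)\left\|\tfrac{1}{\sqrt{\tilde{\mu}(\lambda)}}\Proj_{\tilde{H}_{\lambda}}x\right\|^{2}=\sum_{\lambda\in\tilde{\sigma}}\|\Proj_{\tilde{H}_{\lambda}}x\|^{2}=\|x\|^{2},
\]
where the last equality uses the orthogonal decomposition $x=\sum_{\lambda\in\tilde{\sigma}}\Proj_{\tilde{H}_{\lambda}}x$ coming from $\tilde{H}_{\tilde{\sigma}}={}^*\bigoplus_{\lambda\in\tilde{\sigma}}\tilde{H}_{\lambda}$ and the pairwise orthogonality of the eigenspaces.

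\textbf{Intertwining.} For $x\in\tilde{H}$ and $\lambda\in\tilde{\sigma}$, I would use the second item of Remark \ref{remark_internal_eigenspaces} (that $\tilde{A}$ commutes with $\Proj_{\tilde{H}_{\lambda}}$) together with the fact that $\Proj_{\tilde{H}_{\lambda}}x\in\tilde{H}_{\lambda}=\ker(\lambda I-\tilde{A})$, so $\tilde{A}\Proj_{\tilde{H}_{\lambda}}x=\lambda\Proj_{\tilde{H}_{\lambda}}x$, giving
\[
(\tilde{U}(\tilde{A}x))(\lambda)=\tfrac{1}{\sqrt{\tilde{\mu}(\lambda)}}\Proj_{\tilde{H}_{\lambda}}\tilde{A}x=\tfrac{1}{\sqrt{\tilde{\mu}(\lambda)}}\tilde{A}\Proj_{\tilde{H}_{\lambda}}x=\lambda\cdot(\tilde{U}(x))(\lambda).
\]
No step is really an obstacle here: the proof is essentially bookkeeping once one recognizes that the internal direct integral is a hyperfinite direct sum and that (by transfer) the finite-dimensional spectral theorem has already been applied to $\tilde{A}$.
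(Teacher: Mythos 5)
Your proof is correct and follows essentially the same approach as the paper's. The only cosmetic difference is that the paper derives the kernel identification and the isometry claim simultaneously from the single computation $\|\tilde{U}(x)\|_{\tilde{\mu}}^2 = \|\Proj_{\tilde{H}_{\tilde{\sigma}}}x\|^2$ valid for all $x\in{}^*H$, whereas you treat the two separately; and you spell out the intertwining step in more detail than the paper, which simply cites Remark~\ref{remark_internal_eigenspaces}.
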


\begin{proof}
	It is clear that $\tilde{U}$ is internal.  Linearity of $\tilde{U}$ follows from linearity of othogonal projection maps.
	
	Furthermore, for any $x\in{}^*H$, we have that 
	\begin{align*}
		\|\tilde{U}(x)\|_{\tilde{\mu}}^2&=\int_{\tilde{\sigma}}\|(\tilde{U}(x))(\lambda)\|^2d\tilde{\mu}(\lambda)\\
		&=\sum_{\lambda\in \tilde{\sigma}}\left\|\frac{1}{\sqrt{\tilde{\mu}(\lambda)}}\Proj_{\tilde{H}_\lambda}x\right\|^2\tilde{\mu}(\lambda)\\
		&=\sum_{\lambda\in \tilde{\sigma}}\left\|\Proj_{\tilde{H}_\lambda}x \right\|^2\\
		&=\| \Proj_{\tilde{H}_{\tilde{\sigma}}}x\|^2.
	\end{align*}
	Therefore, we have $\ker(\tilde{U})=\tilde{H}_{\tilde{\sigma}}^{\perp}$ and that $\tilde{U}|_{\tilde{H}_{\tilde{\sigma}}}$ is an isometry. The last part of the proposition follows from Remark \ref{remark_internal_eigenspaces}.
\end{proof}

An interesting idea would be to use the internal direct integral above to construct an external direct integral $\int_{\tilde{\sigma}}^{\oplus}\hat{H}_{\lambda}d\mu_L$, where $\hat{H}_{\lambda}$ is the Hilbert space nonstandard hull of $\tilde{H}_{\lambda}$. While such a construction is certainly possible, it presents significant technical difficulties, most notably with the measurable structure of the direct integral. Furthermore, pushing forward that structure to obtain a suitable direct integral $\int_{\mathbb{R}}^{\oplus}H_{\lambda}d\mu$ also presents quite nontrivial difficulties.

Instead, as mentioned in the introduction, the proposed path here is to use the internal direct integral to establish how the interactions between sections should behave, and use these results to construct  $\int_{\mathbb{R}}^{\oplus}H_{\lambda}d\mu$. 

Our next result (Theorem \ref{theorem_nearstandard_sintegrable} below) is crucial moving forward. We first recall the definition of S-integrability.

\begin{defn}
Suppose that $(X,\Omega,\nu)$ is an internal probability space.  Then an internally measurable function $f:X\rightarrow{}^*\mathbb{K}$ is \textbf{S-integrable} if :
    \begin{itemize}
        \item $\int_X|f|d{\nu}$ is finite; and
        \item for all $E\in \Omega$ such that $\nu(E)\simeq0$, we have $\int_{E}|f|d\nu\simeq 0.$
    \end{itemize}
\end{defn}
\begin{rmk}
    Suppose that $f$ as above is S-integrable.  One can then verify that $f$ is $\nu$-almost everywhere finite. Furthermore,   \cite[Theorem 6.2]{Ross1997} establishes that $\operatorname{st}\circ f$ is $\mu_L$-integrable and $$\St\left(\int_Xfd\nu\right)=\int_{X}(\St\circ f) d\nu_L.$$
    Given $B\in \Omega$, it is straightforward to see that $f\cdot \mathbf{1}_B$ is also S-integrable, whence the equaltiy in the above display remains true when integrating over $B$; we shall use this fact a number of times in the sequel.
\end{rmk}
\begin{thm}\label{theorem_nearstandard_sintegrable}
	For all $x,y\in{}^*H$ with $x$ nearstandard and $y$ finite, the map $\langle \tilde{U}(x), \tilde{U}(y)\rangle$ is an S-integrable function on $\tilde{\sigma}$ with respect to $\tilde{\mu}$. 
\end{thm}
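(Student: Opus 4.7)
The plan is to verify the two clauses of S-integrability for $F := \langle \tilde{U}(x), \tilde{U}(y)\rangle$ simultaneously by establishing a single Cauchy--Schwarz-type bound on $\int_E |F|\,d\tilde{\mu}$ for arbitrary internal $E \subseteq \tilde{\sigma}$, and then extracting both clauses from it. Specifically, for any internal $E \subseteq \tilde{\sigma}$, the internal (pointwise) Cauchy--Schwarz inequality together with the internal Cauchy--Schwarz inequality for integrals gives
\[
\int_E |F|\,d\tilde{\mu} \;\leq\; \left(\int_E \|\tilde{U}(x)\|^2\, d\tilde{\mu}\right)^{1/2}\!\!\left(\int_E \|\tilde{U}(y)\|^2\, d\tilde{\mu}\right)^{1/2}.
\]
The same computation as in the proof of Proposition \ref{proposition_internal_isometry}, but restricted to $E$, shows that $\int_E \|\tilde{U}(z)\|^2\,d\tilde{\mu} = \|\Proj_{\tilde{H}_E} z\|^2$ for any $z \in {}^*H$. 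Therefore
\[
\int_E |F|\,d\tilde{\mu} \;\leq\; \|\Proj_{\tilde{H}_E} x\|\cdot\|\Proj_{\tilde{H}_E} y\|. \tag{$\ast$}
\]

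Taking $E = \tilde{\sigma}$ in $(\ast)$ yields $\int_{\tilde{\sigma}} |F|\,d\tilde{\mu} \leq \|x\|\cdot\|y\|$, which is finite since $x$ is nearstandard (hence finite) and $y$ is finite; this gives the first clause. For the second clause, suppose $E$ is internal with $\tilde{\mu}(E) \simeq 0$. Since $\|\Proj_{\tilde{H}_E} y\| \leq \|y\|$ is finite, $(\ast)$ reduces the problem to showing that $\|\Proj_{\tilde{H}_E} x\| \simeq 0$ whenever $x$ is nearstandard.

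The key observation is the following special case: for each fixed $j \in \mathbb{N}$, the definition of $\tilde{\mu}$ gives
\[
\tilde{c}_j \|\Proj_{\tilde{H}_E} \tilde{e}_j\|^2 \;\leq\; \sum_{k=1}^N \tilde{c}_k \|\Proj_{\tilde{H}_E} \tilde{e}_k\|^2 \;=\; \tilde{\mu}(E) \;\simeq\; 0,
\]
and Property \ref{scale_property_nearstandard} of Definition \ref{definition_scale} forces $\St(\tilde{c}_j) > 0$ (otherwise $\St(\tilde{c}_j \tilde{e}_j) = 0$), so $\tilde{c}_j$ is non-infinitesimal and hence $\|\Proj_{\tilde{H}_E} \tilde{e}_j\| \simeq 0$. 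This immediately extends to any finite linear combination $\sum_{j=1}^k \alpha_j \tilde{e}_j$ with standard coefficients.

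The step I expect to be the most delicate is the extension from this special case to arbitrary nearstandard $x$. Let $x_0 = \St(x)$, so $x \simeq {}^*x_0$ in norm. Fix a standard $\epsilon > 0$. By the dense-spanning property (Property \ref{scale_property_density} of Definition \ref{definition_scale}), choose a finite linear combination $v := \sum_{j=1}^k \alpha_j e_j$ with $\|x_0 - v\| < \epsilon$, and set $\tilde{v} := \sum_{j=1}^k \alpha_j \tilde{e}_j$, so $\tilde{v} \simeq {}^*v$ since each $\tilde{e}_j \simeq {}^*e_j$. Using that $\Proj_{\tilde{H}_E}$ is a (internal) contraction, a triangle inequality yields
\[
\|\Proj_{\tilde{H}_E} x\| \;\leq\; \|x - {}^*x_0\| + \|{}^*x_0 - {}^*v\| + \|{}^*v - \tilde{v}\| + \|\Proj_{\tilde{H}_E}\tilde{v}\|,
\]
in which the first and third terms are infinitesimal, the fourth is infinitesimal by the special case above, and the second equals $\|x_0 - v\| < \epsilon$. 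Hence $\|\Proj_{\tilde{H}_E} x\| \leq \epsilon$ modulo an infinitesimal, and since $\epsilon > 0$ was arbitrary, $\|\Proj_{\tilde{H}_E} x\| \simeq 0$, completing the proof.
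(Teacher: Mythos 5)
Your proof is correct and follows essentially the same route as the paper's: Cauchy--Schwarz twice to get the bound $\int_E |\langle\tilde{U}(x),\tilde{U}(y)\rangle|\,d\tilde{\mu}\leq\|\Proj_{\tilde{H}_E}x\|\,\|\Proj_{\tilde{H}_E}y\|$, then the observation $\tilde{c}_j\|\Proj_{\tilde{H}_E}\tilde{e}_j\|^2\leq\tilde{\mu}(E)$ combined with $\St(\tilde{c}_j)>0$ to kill the scale-vector contributions, then a density-plus-contraction approximation to handle a general nearstandard $x$. The only difference is organizational: you isolate the $\tilde{e}_j$ case as a separate step and route the approximation through $\St(x)$ and $\tilde v$, while the paper folds the same estimates into a single chain of inequalities that also uses $\|\Proj_{\tilde H_E}\tilde e_j\|\leq\tilde c_j^{-1/2}\sqrt{\tilde\mu(E)}$.
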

\begin{proof}
    First, using the Cauchy-Schwarz inequality twice, it is clear that $$\int_{\tilde{\sigma}}\left|\langle \tilde{U}(x),\tilde{U}(y)\rangle\right|d\tilde{\mu}\leq \int_{\tilde{\sigma}}\|\tilde{U}(x)\|\;\|\tilde{U}(y)\|d\tilde{\mu} \leq\|\tilde{U}(x)\|_{\tilde{\mu}}\;\|\tilde{U}(y)\|_{\tilde{\mu}}\leq \|x\|\|y\|.$$ Thus, $\int_{\tilde{\sigma}}\left|\langle \tilde{U}(x),\tilde{U}(y)\rangle\right|d\tilde{\mu}$ is finite.
    It remains to show that, for any internal $E\subset\tilde{\sigma}$ with $\tilde{\mu}(E)\simeq 0$, we have $$\St\left(\int_{E}|\langle \tilde{U}(x),\tilde{U}(y)\rangle|d\tilde{\mu}\right)=0.$$ We have
	\begin{align*}
		\int_{E}|\langle \tilde{U}(x),\tilde{U}(y)\rangle|d\tilde{\mu}&=\sum_{\lambda\in E}\left|\left\langle\tilde{U}(x),\tilde{U}(y)\right\rangle(\lambda) \right|\tilde{\mu}(\lambda)\\
        &=\sum_{\lambda\in E}\left|\left\langle\frac{1}{\sqrt{\tilde{\mu}(\lambda)}}\Proj_{\tilde{H}_{\lambda}}x,\frac{1}{\sqrt{\tilde{\mu}(\lambda)}}\Proj_{\tilde{H}_{\lambda}}y\right\rangle \right|\tilde{\mu}(\lambda)\\
		&=\sum_{\lambda\in E}\left|\left\langle\Proj_{\tilde{H}_{\lambda}}x,\Proj_{\tilde{H}_{\lambda}}y\right\rangle \right|\\
		&\leq\sum_{\lambda\in E}\|\Proj_{\tilde{H}_{\lambda}}x\|\|\Proj_{\tilde{H}_{\lambda}}y\|\\
		&\leq\sqrt{\sum_{\lambda\in E}\|\Proj_{\tilde{H}_{\lambda}}x\|^2}\sqrt{\sum_{\lambda\in E}\|\Proj_{\tilde{H}_{\lambda}}y\|^2}\\
		&=\|\Proj_{\tilde{H}_{E}}x\|\|\Proj_{\tilde{H}_{E}}y\|\leq\|\Proj_{\tilde{H}_{E}}x\|\|y\|.
	\end{align*}
	Since $y$ is finite, it is sufficient (in fact necessary) to show that $\|\Proj_{\tilde{H}_{E}}x\|$ is infinitesimal. Let $\epsilon\in\mathbb{R}_{>0}.$ Using Property \ref{scale_property_density} of Definition \ref{definition_scale} and the fact that $x$ is nearstandard, we may find $k\in\mathbb{N}$ and $\{a_j\}_{j=1}^k$ in $\mathbb{K}$ such that $\|\sum_{j=1}^{k}a_j \tilde{e}_j-x\|<\epsilon.$  Then we have
	\begin{align*}
		\|\Proj_{\tilde{H}_{E}}x\|&\leq\left\|\Proj_{\tilde{H}_{E}}x-\Proj_{\tilde{H}_{E}}\left(\sum_{j=1}^{k}a_j \tilde{e}_j\right)\right\|+\left\| \Proj_{\tilde{H}_{E}}\left(\sum_{j=1}^{k}a_j \tilde{e}_j\right)\right\|\\
		&\leq\left\|x-\sum_{j=1}^{k}a_j \tilde{e}_j\right\|+\sum_{j=1}^{k}|a_j|\left\|\Proj_{\tilde{H}_{E}}\tilde{e}_j\right\|\\
		&\leq\epsilon+\sum_{j=1}^{k}\frac{|a_j|}{\sqrt{\tilde{c}_j}}\sqrt{\tilde{c}_j\|\Proj_{\tilde{H}_{E}}\tilde{e}_j\|^2}\leq\epsilon+\sum_{j=1}^{k}\frac{|a_j|}{\sqrt{\tilde{c}_j}}\sqrt{\tilde{\mu}(E)}.
	\end{align*} 
	Since $\St(\tilde{\mu}(E))=0$, $\St(\|\Proj_{\tilde{H}_{E}}x\|)\leq\epsilon.$ Since $\epsilon$ is arbitrary, we get that $\|\Proj_{\tilde{H}_{E}}x\|$ is infinitesimal, as desired.
\end{proof}
\begin{rmk}
    In the course of the above proof, we established that, for all $x,y\in {}^*H$, we have $$\int_{\tilde{\sigma}}\left|\langle \tilde{U}(x), \tilde{U}(y)\rangle \right|d\tilde{\mu}\leq \|x\|\|y\|.$$
This important inequality is worth isolating here.
\end{rmk}
\begin{cor}\label{corollary_isometry_infinitesimal_perturbation}
    Suppose that $x_1, x_2, y_1,y_2\in{}^*H$ are all nearstandard.  Further suppose that $\St(x_1)=\St(x_2)$ and $\St(y_1)=\St(y_2)$.  Then $$\St\left(\langle\tilde{U}(x_1), \tilde{U}(y_1)\rangle(\lambda)\right) =\St\left(\langle\tilde{U}(x_2), \tilde{U}(y_2)\rangle(\lambda)\right)$$ for $\mu_L$-almost all $\lambda\in\tilde{\sigma}.$
\end{cor}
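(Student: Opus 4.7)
The plan is to set $d_1 := x_1 - x_2$ and $d_2 := y_1 - y_2$, which are both nearstandard with $\St(d_1) = \St(d_2) = 0$; in particular, $\|d_1\|$ and $\|d_2\|$ are infinitesimal. Using bilinearity of the inner product in the fiber and the linearity of $\tilde{U}$ from Proposition \ref{proposition_internal_isometry}, we obtain the pointwise identity
\begin{align*}
\langle\tilde{U}(x_1), \tilde{U}(y_1)\rangle(\lambda) - \langle\tilde{U}(x_2), \tilde{U}(y_2)\rangle(\lambda) = \langle\tilde{U}(d_1), \tilde{U}(y_1)\rangle(\lambda) + \langle\tilde{U}(x_2), \tilde{U}(d_2)\rangle(\lambda)
\end{align*}
on $\tilde{\sigma}$. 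Our goal then reduces to showing that each term on the right has standard part equal to $0$ for $\mu_L$-almost every $\lambda\in\tilde{\sigma}$.

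Next, I would invoke Theorem \ref{theorem_nearstandard_sintegrable} (applied with the appropriate pairs of nearstandard/finite arguments) to conclude that both $\langle\tilde{U}(d_1), \tilde{U}(y_1)\rangle$ and $\langle\tilde{U}(x_2), \tilde{U}(d_2)\rangle$ are S-integrable functions on $\tilde{\sigma}$. By the inequality isolated in the remark following Theorem \ref{theorem_nearstandard_sintegrable}, we have
\begin{align*}
\int_{\tilde{\sigma}}|\langle\tilde{U}(d_1), \tilde{U}(y_1)\rangle|\, d\tilde{\mu} \leq \|d_1\|\|y_1\| \simeq 0, \qquad \int_{\tilde{\sigma}}|\langle\tilde{U}(x_2), \tilde{U}(d_2)\rangle|\, d\tilde{\mu} \leq \|x_2\|\|d_2\| \simeq 0,
\end{align*}
since $d_1$ and $d_2$ have infinitesimal norm while $y_1$ and $x_2$ are nearstandard (and thus have finite norm).

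Finally, applying the S-integrability transfer principle (the remark after the definition of S-integrability, citing \cite[Theorem 6.2]{Ross1997}) to each term, the $\mu_L$-integral of the nonnegative function $\St|\langle\tilde{U}(d_1), \tilde{U}(y_1)\rangle|$ equals the standard part of the internal integral, which is $0$; the same holds for the second term. Hence both $\St\langle\tilde{U}(d_1), \tilde{U}(y_1)\rangle(\lambda)$ and $\St\langle\tilde{U}(x_2), \tilde{U}(d_2)\rangle(\lambda)$ vanish $\mu_L$-a.e., and since S-integrability also guarantees the individual values of $\langle\tilde{U}(x_i), \tilde{U}(y_i)\rangle(\lambda)$ are finite $\mu_L$-a.e., the standard part distributes over the identity displayed above, yielding the conclusion. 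There is no real obstacle here: the result is essentially a direct corollary of the S-integrability bound combined with the basic fact that a nonnegative $\mu_L$-integrable function with zero integral vanishes almost everywhere.
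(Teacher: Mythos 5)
Your proof is correct and takes essentially the same approach as the paper: split the difference into two bilinear terms, control each via the Cauchy--Schwarz estimate $\int_{\tilde{\sigma}}|\langle\tilde{U}(x),\tilde{U}(y)\rangle|\,d\tilde{\mu}\leq\|x\|\|y\|$ together with Theorem~\ref{theorem_nearstandard_sintegrable} and the $\St$-integral swap of \cite[Theorem 6.2]{Ross1997}. The only cosmetic difference is bookkeeping --- the paper bounds the internal $L^1(\tilde{\mu})$-norm of the single difference function and pushes $\St$ inside at the end, while you push $\St$ inside first and treat the two pieces separately --- which leads to identical conclusions.
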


\begin{proof}
    By Theorem \ref{theorem_nearstandard_sintegrable}, we know that both $\langle \tilde{U}(x_1),\tilde{U}(y_1)\rangle$ and $\langle \tilde{U}(x_2),\tilde{U}(y_2)\rangle$ are S-integrable. Using the triangle inequality, it is straightforward to see that the internal function $\left|\langle \tilde{U}(x_1),\tilde{U}(y_1)\rangle-\langle \tilde{U}(x_2),\tilde{U}(y_2)\rangle \right|$ is S-integrable as well. Therefore,

    \begin{align*}
        \int_{\tilde{\sigma}}&\left|\St(\langle \tilde{U}(x_1),\tilde{U}(y_1)\rangle)-\St(\langle \tilde{U}(x_2),\tilde{U}(y_2)\rangle) \right|d\mu_L\\ &=\int_{\tilde{\sigma}}\St\left(\left|\langle \tilde{U}(x_1),\tilde{U}(y_1)\rangle-\langle \tilde{U}(x_2),\tilde{U}(y_2)\rangle \right|\right)d\mu_L\\
        &=\St\left(\int_{\tilde{\sigma}}\left|\langle \tilde{U}(x_1),\tilde{U}(y_1)\rangle-\langle \tilde{U}(x_2),\tilde{U}(y_2)\rangle \right|d\tilde{\mu}\right)\\
        &=\St\left(\int_{\tilde{\sigma}}\left|\langle \tilde{U}(x_1),\tilde{U}(y_1-y_2)\rangle+\langle \tilde{U}(x_1-x_2),\tilde{U}(y_2)\rangle \right|d\tilde{\mu}\right)\\
        &\leq\St\left(\int_{\tilde{\sigma}}\left|\langle \tilde{U}(x_1),\tilde{U}(y_1-y_2)\rangle\right|d\tilde{\mu}\right)+\St\left(\int_{\tilde{\sigma}}\left|\langle \tilde{U}(x_1-x_2),\tilde{U}(y_2)\rangle\right|d\tilde{\mu}\right)\\
        &\leq \St\left(\|x_1\|\;\|y_1-y_2\|\right)+\St\left(\|x_1-x_2\|\;\|y_2\|\right)=0.
    \end{align*}
    Therefore, $\int_{\tilde{\sigma}}\left|\St(\langle \tilde{U}(x_1),\tilde{U}(y_1)\rangle)-\St(\langle \tilde{U}(x_2),\tilde{U}(y_2)\rangle)\right|d\mu_L=0$, from which we conclude $$\St\left(\langle\tilde{U}(x_1), \tilde{U}(y_1)\rangle(\lambda)\right) =\St\left(\langle\tilde{U}(x_2), \tilde{U}(y_2)\rangle(\lambda)\right)$$ for $\mu_L$-almost all $\lambda\in\tilde{\sigma},$ completing the proof.
\end{proof}

\section{Hilbert Family Construction}
In this section, we build the spectral family that will be used to complete the proof of Theorem \ref{theorem_spectral_direct}.  We begin by using Theorem \ref{theorem_nearstandard_sintegrable} to construct our direct integral using the expected interactions.

\begin{defn}\label{definition_nu_measure}
	For $x,y\in H$, let $\nu^{x,y}$ be the $\mathbb{K}$-signed measure\footnote{When $\mathbb K=\mathbb C$, a $\mathbb K$-signed measure is simply a complex measure.} on $\operatorname{Borel}(\mathbb{R})$ defined by $$\nu^{x,y}(B)=\int_{\St_{\tilde{\sigma}}^{-1}(B)}\St\left(\left\langle\tilde{U}(x),\tilde{U}(y)\right\rangle\right)d\mu_L.$$
\end{defn} 

Note that $\nu^{x,y}$ is in fact a finite $\mathbb K$-signed measure on $\operatorname{Borel}(\mathbb{R})$.  Indeed, by Theorem \ref{theorem_nearstandard_sintegrable}, $\St\left(\left\langle\tilde{U}(x),\tilde{U}(y)\right\rangle\right)$ is a $\mu_L$-integrable function on $\tilde{\sigma}$, whence the map $$V\mapsto \int_{V}\St\left\langle\tilde{U}(x),\tilde{U}(y)\right\rangle d\mu_L$$ is a finite $\mathbb{K}$-signed measure on $\mathcal{A}_L$; the measure $\nu^{x,y}$ is then the push-forward of this measure along $\St_{\tilde{\sigma}}$. Furthermore, the following is a direct consequence of Corollary \ref{corollary_isometry_infinitesimal_perturbation}.

\begin{lemma}\label{lemma:infinitesimalchange}
    Fix $x,y\in H$ and take $\tilde{x},\tilde{y}\in {}^*H$ such that $\tilde{x}\simeq x$ and $\tilde{y}\simeq y$.  Then $$\nu^{x,y}(B)=\int_{\St_{\tilde{\sigma}}^{-1}(B)}\St\left\langle\tilde{U}(\tilde{x}),\tilde{U}(\tilde{y})\right\rangle d\mu_L.$$
\end{lemma}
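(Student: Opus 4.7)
The statement is essentially a direct consequence of Corollary \ref{corollary_isometry_infinitesimal_perturbation}, applied to the quadruple $x_1 := {}^*x$, $x_2 := \tilde{x}$, $y_1 := {}^*y$, $y_2 := \tilde{y}$. First I would verify the hypotheses: the standard elements ${}^*x, {}^*y$ are trivially nearstandard, while $\tilde{x}$ and $\tilde{y}$ are nearstandard by assumption (infinite closeness to a standard element implies nearstandard-ness); moreover $\St({}^*x) = x = \St(\tilde{x})$ and $\St({}^*y) = y = \St(\tilde{y})$ by hypothesis.

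The corollary then yields the pointwise equality
\[
\St\bigl\langle\tilde{U}({}^*x),\tilde{U}({}^*y)\bigr\rangle(\lambda) \;=\; \St\bigl\langle\tilde{U}(\tilde{x}),\tilde{U}(\tilde{y})\bigr\rangle(\lambda)
\]
for $\mu_L$-almost every $\lambda\in\tilde{\sigma}$. Both sides are $\mu_L$-integrable functions on $\tilde{\sigma}$: by Theorem \ref{theorem_nearstandard_sintegrable}, each of the internal functions $\langle\tilde{U}({}^*x),\tilde{U}({}^*y)\rangle$ and $\langle\tilde{U}(\tilde{x}),\tilde{U}(\tilde{y})\rangle$ is S-integrable with respect to $\tilde{\mu}$, and the remark following the definition of S-integrability guarantees that their standard parts are $\mu_L$-integrable.

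Finally, the set $\St_{\tilde{\sigma}}^{-1}(B)$ lies in $\mathcal{A}_L$ since $B\in\operatorname{Borel}(\mathbb{R})$ and $\St_{\tilde{\sigma}}$ is measurable (as noted after Proposition \ref{proposition_almostall_eigenvalues_finite}). Integrating the $\mu_L$-a.e.\ equality above over this set gives
\[
\int_{\St_{\tilde{\sigma}}^{-1}(B)}\St\bigl\langle\tilde{U}(x),\tilde{U}(y)\bigr\rangle\,d\mu_L \;=\; \int_{\St_{\tilde{\sigma}}^{-1}(B)}\St\bigl\langle\tilde{U}(\tilde{x}),\tilde{U}(\tilde{y})\bigr\rangle\,d\mu_L,
\]
and the left-hand side is by definition $\nu^{x,y}(B)$, completing the proof. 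There is no real obstacle here; the lemma amounts to packaging Corollary \ref{corollary_isometry_infinitesimal_perturbation} in the integral form that will be used later to show that $\nu^{x,y}$ depends only on the standard parts $x$ and $y$, a fact which will be needed in the subsequent construction of the measurable Hilbert family.
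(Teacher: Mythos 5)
Your proof is correct and follows precisely the route the paper intends: the paper simply states that the lemma ``is a direct consequence of Corollary \ref{corollary_isometry_infinitesimal_perturbation},'' and your argument is exactly the unpacking of that assertion—apply the corollary to ${}^*x,\tilde{x},{}^*y,\tilde{y}$ to obtain $\mu_L$-a.e.\ equality of the standard-part functions, note integrability via Theorem \ref{theorem_nearstandard_sintegrable}, and integrate over $\St_{\tilde{\sigma}}^{-1}(B)$.
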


\begin{prop}
	For all $x,y\in H$, we have $\nu^{x,y}\ll\mu$.
\end{prop}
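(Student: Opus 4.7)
The plan is to unwind the definitions of $\mu$ and $\nu^{x,y}$ and apply the S-integrability established in Theorem \ref{theorem_nearstandard_sintegrable}. Concretely, let $B\in\operatorname{Borel}(\mathbb{R})$ satisfy $\mu(B)=0$. Since $\mu=(\St_{\tilde{\sigma}})_*\mu_L$ and $\St_{\tilde{\sigma}}$ is measurable, we have $\St_{\tilde{\sigma}}^{-1}(B)\in\mathcal{A}_L$ and $\mu_L\!\left(\St_{\tilde{\sigma}}^{-1}(B)\right)=0$. By Theorem \ref{theorem_nearstandard_sintegrable} (any standard $x,y\in H$ are, in particular, nearstandard and finite when viewed in ${}^*H$ under the omitted-star convention), the internal function $\langle\tilde{U}(x),\tilde{U}(y)\rangle$ is S-integrable with respect to $\tilde{\mu}$. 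Consequently, its standard part $\St\langle\tilde{U}(x),\tilde{U}(y)\rangle$ is $\mu_L$-integrable on $\tilde{\sigma}$.

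The integral of a $\mu_L$-integrable function over a $\mu_L$-null set vanishes, so directly from Definition \ref{definition_nu_measure} we obtain
\[
\nu^{x,y}(B)=\int_{\St_{\tilde{\sigma}}^{-1}(B)}\St\langle\tilde{U}(x),\tilde{U}(y)\rangle\, d\mu_L=0.
\]
Since this holds for every $\mu$-null Borel set $B$, we conclude $\nu^{x,y}\ll\mu$. In the complex scalar case, $\nu^{x,y}$ is a complex measure and this is exactly the usual definition of absolute continuity (equivalently, $|\nu^{x,y}|\ll\mu$); in the real case it is the corresponding statement for finite signed measures.

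I do not anticipate any real obstacle: all the analytic content has already been absorbed into Theorem \ref{theorem_nearstandard_sintegrable}, and what remains is the elementary push-forward identity $\mu_L(\St_{\tilde{\sigma}}^{-1}(B))=\mu(B)$ together with the standard fact that integrable functions have vanishing integral on null sets. The one subtlety worth flagging is the appeal to Theorem \ref{theorem_nearstandard_sintegrable} under the convention that we silently identify $x\in H$ with ${}^*x\in{}^*H$; once this is noted, the proof is a three-line verification.
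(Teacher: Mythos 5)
Your proof is correct and follows the paper's argument exactly: unwind the definition of $\nu^{x,y}$ via the push-forward identity $\mu_L\bigl(\St_{\tilde{\sigma}}^{-1}(B)\bigr)=\mu(B)$, note that $\St\langle\tilde{U}(x),\tilde{U}(y)\rangle$ is $\mu_L$-integrable by Theorem \ref{theorem_nearstandard_sintegrable}, and conclude that its integral over the $\mu_L$-null set $\St_{\tilde{\sigma}}^{-1}(B)$ vanishes. The paper's proof is the same three-line verification, merely stated more tersely.
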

\begin{proof}
	Let $B\subset \mathbb{R}$ be any Borel set such that $\mu(B)=0$. Then, by definition of $\mu$, we have $\mu_L(\St_{\tilde{\sigma}}^{-1}(B))=\mu(B)=0$, and thus $$\int_{\St_{\tilde{\sigma}}^{-1}(B)}\St\left\langle\tilde{U}(x),\tilde{U}(y)\right\rangle d\mu_L=0.$$
\end{proof}
\begin{defn}
	For $j,l\in\mathbb{N}$, set $U_{j,l}=\frac{d\nu^{e_j,e_l}}{d\mu}:\mathbb{R}\rightarrow\mathbb{K}$. Furthermore, given $t\in\mathbb{R}$ and $n\in\mathbb{N}$, let $M_n(t)$ be the square matrix of size $n$ whose coefficients in $\mathbb{K}$ are given by $(M^{(n)}(t))_{i,j}=U_{i,j}(t)$.  
\end{defn}

These coefficients $U_{j,l}$ will be the foundation of the family of spaces $\{H_{t}\}_{t\in\mathbb{R}}$, as well as the sections corresponding to each $e_j$.

\begin{prop}\label{proposition_measurable_sections}
	There exists a sequence  $(V_j:\mathbb{R}\rightarrow \ell_2^{\mathbb{K}})_{j\in\mathbb{N}}$ of Borel measurable functions such that, for all $j,l\in\mathbb{N}$ and for $\mu$-almost all $t\in \mathbb{R}$, we have $\langle V_j(t),V_l(t)\rangle=U_{j,l}(t)$. 
\end{prop}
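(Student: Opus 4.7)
The plan is to recognize the matrices $M^{(n)}(t)$ as positive semi-definite Hermitian Gram matrices for $\mu$-almost every $t$, and then to apply a measurable Cholesky-type decomposition pointwise.

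First, I would verify that the assignment $(x,y)\mapsto\nu^{x,y}$ is sesquilinear over $H$ and satisfies $\nu^{y,x}=\overline{\nu^{x,y}}$. Both properties descend from the internal linearity of $\tilde U$ (Proposition \ref{proposition_internal_isometry}) together with the fact that $\St$ respects arithmetic on finite hyperreals. Consequently, for any tuple $(a_1,\dots,a_n)\in\mathbb{K}^n$, one has $\nu^{\sum_i a_i e_i,\,\sum_j a_j e_j}=\sum_{i,j}a_i\overline{a_j}\nu^{e_i,e_j}$, and the left-hand side is a non-negative measure (being of the form $\nu^{x,x}$, and $\langle\tilde U(x),\tilde U(x)\rangle\geq 0$ internally). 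Passing to Radon--Nikodym derivatives with respect to $\mu$ yields $\sum_{i,j}a_i\overline{a_j}U_{i,j}(t)\geq 0$ for $\mu$-almost every $t$. Varying over a countable dense set of coefficient tuples and over all $n\in\mathbb{N}$, and combining this with $U_{i,j}=\overline{U_{j,i}}$ $\mu$-almost everywhere, I conclude that on some $\mu$-conull Borel set $E\subseteq\mathbb{R}$ the matrix $M^{(n)}(t)$ is Hermitian and positive semi-definite for every $n\in\mathbb{N}$ simultaneously.

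On $E$, I would perform a generalized Cholesky decomposition of the nested family $(M^{(n)}(t))_n$: set $v_{j,k}(t):=0$ for $k>j$, then define $v_{k,k}(t):=\sqrt{U_{k,k}(t)-\sum_{i<k}|v_{k,i}(t)|^2}$ (the radicand being the $k$-th Schur pivot of $M^{(k)}(t)$, hence non-negative), and for $j>k$ set $v_{j,k}(t):=v_{k,k}(t)^{-1}\bigl(U_{j,k}(t)-\sum_{i<k}v_{j,i}(t)\overline{v_{k,i}(t)}\bigr)$ when $v_{k,k}(t)\neq 0$, and $v_{j,k}(t):=0$ otherwise. Then $V_j(t):=(v_{j,k}(t))_{k\in\mathbb{N}}$ is supported in the first $j$ coordinates and so lies trivially in $\ell_2^{\mathbb{K}}$, and by design $\langle V_j(t),V_l(t)\rangle=U_{j,l}(t)$. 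Since the Cholesky algorithm grows monotonically in $n$, this single construction simultaneously realizes all $M^{(n)}(t)$. Borel measurability of each $v_{j,k}$ (hence of $V_j:\mathbb{R}\to\ell_2^{\mathbb{K}}$) follows by induction from arithmetic operations, a non-negative square root, and measurable case splits along the Borel sets $\{t:v_{k,k}(t)=0\}$; on the complement of $E$ we extend all $v_{j,k}$ by zero.

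The main technical obstacle is the zero-pivot case: one must check that when $v_{k,k}(t)=0$, the quantity $U_{j,k}(t)-\sum_{i<k}v_{j,i}(t)\overline{v_{k,i}(t)}$ also vanishes for every $j>k$, so that setting $v_{j,k}(t):=0$ is consistent with the inner product identities rather than destroying them. This reduces to the standard linear-algebra fact that a positive semi-definite Hermitian matrix with a zero diagonal entry has vanishing row and column at that index, applied to the PSD Schur complement obtained after the first $k-1$ inductive steps; it is precisely here that the full positive semi-definiteness of $M^{(n)}(t)$ for $n\geq j$ (not merely of its $k\times k$ leading block) is invoked.
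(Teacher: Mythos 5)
Your proof is correct, and it takes a genuinely different (though conceptually related) route from the paper. Both arguments rest on the same preliminary fact, namely that $M^{(n)}(t)$ is Hermitian and positive semi-definite for all $n$ on a $\mu$-conull Borel set; the paper obtains this by integrating the quadratic form $\langle M^{(n)}(t)v,v\rangle$ over Borel sets and recognizing the result as $\int_{\St^{-1}(B)}\St\|\sum_j\overline{v_j}\tilde U(e_j)\|^2d\mu_L\geq 0$, while you obtain it by using the sesquilinearity of $(x,y)\mapsto\nu^{x,y}$, the positivity of $\nu^{x,x}$, and a Radon--Nikodym derivative; these are the same argument in different clothes. The real divergence is in the factorization step. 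The paper builds $V_n(t)$ via a Riesz representation: it defines a bounded linear functional $T_n(t)$ on $E_n(t)=\Span\{V_j(t)\}_{j<n}$, sets $V_n(t)=R_{E_n(t)}(T_n(t))+\sqrt{U_{n,n}(t)-\|T_n(t)\|^2}\,g_n$, and then has to separately establish measurability via a lemma (Lemma \ref{lemma_measurable_projection}) producing measurable coefficients for $\Proj_{E_n(t)}$. You instead run a pointwise column-by-column Cholesky algorithm. The functional-analytic language in the paper hides the degenerate (zero-pivot) case inside the well-definedness and boundedness of $T_n(t)$; in your explicit version the degeneracy is visible and handled by a case split. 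What Cholesky buys you is that measurability of each $v_{j,k}$ is automatic from arithmetic, square roots, and Borel case splits, so you can dispense with the appendix lemma on measurable projections entirely; what the paper's approach buys is that it never needs to verify that the recursion preserves positive semi-definiteness of the successive Schur complements. On that last point, be careful: the PSD-ness of $S^{(k-1)}_{j,l}=U_{j,l}-\sum_{i<k}v_{j,i}\overline{v_{k,i}}$ (restricted to indices $\geq k$) is not just "the Schur complement of $M^{(n)}(t)$" in the classical sense when earlier pivots already vanished; it needs to be carried as an inductive invariant (it is preserved in both the nonzero-pivot case, by the usual Schur-complement argument, and in the zero-pivot case, where the row and column vanish and the Schur complement is unchanged). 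You gesture at this correctly, but in a written-out version that invariant deserves to be stated and proved explicitly, since your zero-pivot consistency check depends on it.
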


To prove the above proposition, we need a lemma. 
\begin{lemma}
	For $\mu$-almost all $t\in\mathbb{R}$, the matrix $M^{(n)}(t)$ is positive semi-definite for all $n\in\mathbb{N}$.
\end{lemma}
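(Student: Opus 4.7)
The plan is to recognize $M^{(n)}(t)$ as the ``standard part'' of a Gram matrix coming from the internal map $\tilde{U}$. Fix $n\in\mathbb{N}$ and $(a_1,\ldots,a_n)\in\mathbb{K}^n$; positive semi-definiteness of $M^{(n)}(t)$ amounts to the inequality $\sum_{i,j=1}^n a_i\overline{a_j}U_{i,j}(t)\geq 0$ holding for every such tuple. If I can establish this for each fixed tuple outside a $\mu$-null set depending on the tuple, a standard countable-density argument will finish the job.

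To prove the inequality for a fixed tuple, I set $x:=\sum_{i=1}^n a_i e_i\in H$. By the linearity of $\tilde{U}$ (Proposition \ref{proposition_internal_isometry}), for each $\lambda\in\tilde{\sigma}$ one has
$$\sum_{i,j=1}^n a_i\overline{a_j}\langle\tilde{U}(e_i),\tilde{U}(e_j)\rangle(\lambda)=\Bigl\|\sum_{i=1}^n a_i\tilde{U}(e_i)(\lambda)\Bigr\|^2=\|\tilde{U}(x)(\lambda)\|^2\geq 0.$$
Each $\langle\tilde{U}(e_i),\tilde{U}(e_j)\rangle$ is S-integrable by Theorem \ref{theorem_nearstandard_sintegrable}, and so is the finite combination above. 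Its standard part, which equals $\sum_{i,j} a_i\overline{a_j}\St\langle\tilde{U}(e_i),\tilde{U}(e_j)\rangle(\lambda)$, is therefore $\geq 0$ for $\mu_L$-almost every $\lambda$.

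Integrating over $\St_{\tilde{\sigma}}^{-1}(B)$ for an arbitrary Borel $B\subseteq\mathbb{R}$ and invoking Definition \ref{definition_nu_measure}, I deduce $\sum_{i,j=1}^n a_i\overline{a_j}\nu^{e_i,e_j}(B)\geq 0$. Hence the $\mathbb{K}$-signed measure $\sum_{i,j}a_i\overline{a_j}\nu^{e_i,e_j}$ is in fact a non-negative real measure on $\operatorname{Borel}(\mathbb{R})$. Taking Radon-Nikodym derivatives with respect to $\mu$ then gives $\sum_{i,j=1}^n a_i\overline{a_j}U_{i,j}(t)\geq 0$ for $\mu$-almost every $t$, with an exceptional $\mu$-null set $N_{n,(a)}$ depending on the tuple.

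For the final step, for each $n$ I fix a countable dense subset $D_n\subseteq\mathbb{K}^n$ (taking $\mathbb{Q}^n$ when $\mathbb{K}=\mathbb{R}$ and $(\mathbb{Q}+i\mathbb{Q})^n$ when $\mathbb{K}=\mathbb{C}$) and set $N:=\bigcup_{n\in\mathbb{N}}\bigcup_{(a)\in D_n}N_{n,(a)}$, which is still $\mu$-null. For $t\notin N$ the inequality holds for every $n$ and every tuple in $D_n$; since at a fixed $t$ the quadratic form $(a_1,\ldots,a_n)\mapsto \sum a_i\overline{a_j}U_{i,j}(t)$ is continuous in its coefficients, it extends to all of $\mathbb{K}^n$, which is precisely positive semi-definiteness of $M^{(n)}(t)$ for every $n$. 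I do not anticipate a serious obstacle: the argument is a clean transfer of the trivial internal inequality $\|\tilde{U}(x)(\lambda)\|^2\geq 0$ to the external setting via S-integrability and Radon-Nikodym, capped off with a routine density/continuity step.
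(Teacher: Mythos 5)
Your proof is correct and follows essentially the same route as the paper's: both reduce positive semi-definiteness to the nonnegativity of $\sum a_i\overline{a_j}U_{i,j}(t)$ for tuples in a countable dense set, and both establish that nonnegativity by identifying the corresponding internal quantity as $\|\sum_i a_i \tilde{U}(e_i)(\lambda)\|^2$, pushing the pointwise positivity through S-integrability to the Loeb level, and then to $\mathbb{R}$ via $\nu^{e_i,e_j}$. The only cosmetic difference is that you phrase the identity via $x=\sum a_i e_i$ and invoke Radon--Nikodym explicitly, whereas the paper argues directly that $\int_B\langle M^{(n)}(t)v,v\rangle\,d\mu\geq 0$ for all Borel $B$ and reads off the a.e. inequality.
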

\begin{proof}
	For $n\in \mathbb{N}$, set $S_n=\{t\in\mathbb{R}\;|\;M^{(n)}(t)\text{ is positive semi-definite}\}$; we want to show that $\mu(\bigcap_{n\in\mathbb{N}} S_n)=1.$ 
	
	Fix $n\in\mathbb{N}$ and let $\{v^{(k)}\}_{k\in\mathbb{N}}$ be a dense subset of $\mathbb{K}^n$. By density, we have that $t\in S_n$ if and only if $\langle M^{(n)}(t) v^{(k)},v^{(k)}\rangle\geq 0$ holds for all $k\in\mathbb{N}$. Fix $k\in\mathbb{N}$; for $t\in\mathbb{R}$, we have 
	\begin{align*}
		\langle M^{(n)}(t) v^{(k)},v^{(k)}\rangle=\sum_{j,l=1}^{n}U_{l,j}(t)v^{(k)}_j\overline{v^{(k)}_l}.
	\end{align*}
	As such, for any Borel $B\subseteq \mathbb{R}$, we have
	
	\begin{align*}
		\int_{B}\langle M^{(n)}(t) v^{(k)}&,v^{(k)}\rangle d\mu(t)=\sum_{j,l=1}^{n}v^{(k)}_j\overline{v^{(k)}_l}\int_{B}U_{l,j}d\mu\\
		&=\sum_{j,l=1}^{n}v^{(k)}_j\overline{v^{(k)}_l}\nu^{e_l,e_j}(B)\\
		&=\sum_{j,l=1}^{n}v^{(k)}_j\overline{v^{(k)}_l}\int_{\St^{-1}(B)}\St\left(\left\langle\tilde{U}(e_l),\tilde{U}(e_j)\right\rangle\right)d\mu_L\\
		&=\int_{\St^{-1}(B)}\St\left(\left\|\sum_{j=1}^{n}\overline{v^{(k)}_j}(\tilde{U}\left(e_j\right))(\lambda) \right\|^2\right)d\mu_L(\lambda)\\
		&\geq0.
	\end{align*}
	Since this holds for any Borel $B\subseteq \mathbb{R}$, we have that $\langle M^{(n)}(t) v^{(k)},v^{(k)}\rangle \geq0$ for $\mu$-almost all $t\in \mathbb{R}$. Since this held for an arbitrary $k\in \mathbb{N}$, we have that we have that $\mu(S_n)=1$. Since the latter sentence held for an arbitrary $n\in \mathbb{N}$, we conclude that $\mu(\bigcap_{n\in\mathbb{N}} S_n)=1$. 
\end{proof}
\begin{proof}[Proof of Proposition \ref{proposition_measurable_sections}]
	We begin by setting up some notation to be used throughout the proof.  First, given any Hilbert space $E$, let $R_E: E'\rightarrow E$ be the map given by the Riesz representation Theorem, that is, for $f\in E'$ and $x\in E$, we have that $\langle x,R_E(f)\rangle=f(x)$. Second, we let $(g_1,g_2,\;\dots)$ denote the standard orthonormal basis of $\ell_2^{\mathbb{K}}$.  Finally, we set $$S:=\{t\in\mathbb{R}\;|\; \text{ for all $n\in\mathbb{B}$, } M^{(n)}(t)\text{ is positive semi-definite}\}.$$ 
    
    By the preceding lemma, $\mu(S)=1$.  As a result, we only need to define each $V_i$ on $S$. We define the sequence $(V_i)_{i\in \mathbb{N}}$ by recursion.  Throughout the construction, we recursively assume that, for any $t\in S$, we have $V_n(t)\in\Span\{g_j\}_{j=1}^n$.
	
	Suppose that $V_1,\ldots,V_{n-1}$ have been defined satisfying the conclusion of the proposition and the extra condition stated above.  We now show how to define $V_n$ on $S$.  In the rest of the proof, we assume that $t\in S$.  
	
	Given $a_1,\ldots,a_{n-1}\in \mathbb{K}$, let $a=(\overline{a_1},\ldots,\overline{a_{n-1}},0)\in \mathbb{K}^n$.  Using the Cauchy-Schwarz inequality and the fact that $M^{(n)}(t)$ is positive semi-definite, we have that:
	\begin{align*}
		\left|\sum_{j=1}^{n-1} a_j U_{j,n}(t)\right|^2&=|\langle M^{(n)}(t)(0,\dots,0,1) ,a\rangle|^2\\
		&\leq \langle M^{(n)}(t)(0,\dots,0,1),(0,\dots,0,1)\rangle\cdot \langle M^{(n)}(t)a,a\rangle\\
		&=U_{n,n}(t)\left(\sum_{j,l=1}^{n-1}a_j\overline{a_l}U_{j,l}(t)\right)\\
		&=U_{n,n}(t)\left(\sum_{j,l=1}^{n-1}a_j\overline{a_l}\langle V_j(t),V_l(t)\rangle\right)\\
		&=U_{n.n}(t)\left\|\sum_{j=1}^{n-1}a_jV_j(t)\right\|^2.
	\end{align*}
	Set $E_n(t):=\Span\{V_j(t)\}_{j=1}^{n-1}$.  The above inequality allows us to define the function $T_n(t):E_n(t)\rightarrow\mathbb{K}$ by $$T_n(t)\left(\sum_{j=1}^{n-1}a_jV_j(t)\right)=\sum_{j=1}^{n-1}a_jU_{j,n}(t).$$ It also follows that $T_n(t)$ is a bounded linear functional with $\|T_n(t)\|^2\leq U_{n,n}(t)$. 
	
	Set $$V_n(t)=R_{E_n(t)}(T_n(t))+\sqrt{U_{n,n}(t)-\|T_n(t)\|^2}g_n.$$  Note first that $V_n(t)\in\Span\{g_j\}_{j=1}^n$.  We next show that the inner products of $V_i(t)$ and $V_n(t)$ for $i=1,\ldots,n-1$ are as desired.  To see this, first note that $g_n\perp E_n(t)$. Using that $M^{(n)}(t)$ is self-adjoint, we have, for any $j\in [n-1] $, that:
	
	\begin{align*}
		\langle V_j(t),V_n(t)\rangle&=\langle V_j(t),R_{E_n(t)}(T_n(t))\rangle =T_n(t)(V_j(t))=U_{j,n}(t);\\
		\langle V_n(t),V_j(t)\rangle&=\overline{\langle V_j(t),V_n(t)\rangle}=\overline{U_{j,n}(t)}=U_{n,j}(t);\\
		\langle V_n(t),V_n(t)\rangle &=\|R_{E_n(t)}(T_n(t))\|^2+U_{n,n}(t)-\|T_n(t)\|^2=U_{n,n}(t).
	\end{align*}
	To finish the proof, we must show that $V_n$ is measurable. Since $U_{n,n}$ is measurable (by definition), since $\|T_n(t)\|^2=\|R_{E_n(t)}(T_n(t))\|^2$, and  since $\ell_2^{\mathbb{K}}$ is separable, it follows from Lemma \ref{lem:HSmeasurablefunction} of the Appendix that it is sufficient to show that $$t\rightarrow \langle y,R_{E_n(t)}(T_n(t))\rangle=\langle\Proj_{E_n(t)}y,R_{E_n(t)}(T_n(t))\rangle=(T_n(t))(\Proj_{E_n(t)}y)$$ is measurable for any $y\in\ell_2^{\mathbb{K}}$. By Lemma \ref{lemma_measurable_projection} of the Appendix, there are measurable functions $a_1,\;\dots,\;a_{n-1}:S\rightarrow\mathbb{K}$ such that $$\Proj_{E_n(t)}y=\sum_{j=1}^{n-1} a_j(t) V_j(t). $$ It follows that $$(T_n(t))(\Proj_{E_n(t)}y)=\sum_{j=1}^{n-1} a_j(t) U_{j,n}(t),$$ which is thus measurable, as desired.
\end{proof}
\begin{conv}
    We now fix a sequence $(V_j)_{j\in\mathbb{N}}$ of functions satisfying Proposition \ref{proposition_measurable_sections}. Furthermore, given $t\in\mathbb{R}$, we define $$H_t=\overline{\Span\{V_j(t)\}_{j\in\mathbb{N}}}\leq \ell_2^{\mathbb{K}}.$$
\end{conv}
    


\begin{rmk}
    By Proposition \ref{prop:inducedmeasfamily} of the Appendix, we have an induced measurable structure on $\prod_{t\in \mathbb{R}}H_t$ whose measurable sections are those elements $X\in \prod_{t\in \mathbb{R}} H_t$ for which $X$ is measurable when viewed as a function $\mathbb{R}\to \ell_2^{\mathbb{K}}$.
\end{rmk}

	

We can now establish the existence of the map from Theorem \ref{theorem_spectral_direct}.

\begin{prop}
	There exists a unique linear isometry $U:H\rightarrow\int_{\mathbb{R}}^{\oplus}H_td\mu$ such that for each $j\in\mathbb{N},$ $U(e_j)=V_j$. 
\end{prop}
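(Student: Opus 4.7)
The plan is to define $U$ first on the $\mathbb{K}$-linear span $E:=\operatorname{span}\{e_j : j\in \mathbb{N}\}$ by the rule $e_j\mapsto V_j$ (extended linearly), verify that this extension is a well-defined linear isometry, and then appeal to Property \ref{scale_property_density} of Definition \ref{definition_scale} (which guarantees that $E$ is dense in $H$) to extend by continuity to a linear isometry on all of $H$. Uniqueness of $U$ subject to $U(e_j)=V_j$ will then follow immediately from continuity and density.

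The main analytic content is the identity
$$\langle V_j, V_l\rangle_\mu = \langle e_j, e_l\rangle \quad\text{for all } j,l\in\mathbb{N}.$$
Once this is in hand, the computation $\|\sum a_j V_j\|_\mu^2 = \sum_{j,l} a_j\overline{a_l}\langle e_j,e_l\rangle = \|\sum a_j e_j\|^2$ simultaneously shows that the map $\sum a_j e_j\mapsto \sum a_j V_j$ is well-defined on $E$ (since the right-hand side is forced to vanish whenever $\sum a_j e_j=0$) and that it is an isometry on $E$; the extension to $H$ is then routine.

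To prove the identity I would chain together the results of Sections 3--5. By the definition of $U_{j,l}$ as $d\nu^{e_j,e_l}/d\mu$ together with Proposition \ref{proposition_measurable_sections},
$$\langle V_j, V_l\rangle_\mu = \int_{\mathbb{R}} U_{j,l}\,d\mu = \nu^{e_j,e_l}(\mathbb{R}).$$
Since $\tilde e_j \simeq e_j$ and $\tilde e_l\simeq e_l$ by convention, Lemma \ref{lemma:infinitesimalchange} rewrites $\nu^{e_j,e_l}(\mathbb{R})$ as $\int_{\St_{\tilde\sigma}^{-1}(\mathbb{R})}\St\langle\tilde U(\tilde e_j),\tilde U(\tilde e_l)\rangle\,d\mu_L$; Proposition \ref{proposition_almostall_eigenvalues_finite} enlarges the domain of integration to all of $\tilde\sigma$ up to a $\mu_L$-null set; and S-integrability (Theorem \ref{theorem_nearstandard_sintegrable}) lets me pull the standard part outside the integral, yielding
$$\nu^{e_j,e_l}(\mathbb{R}) = \St\langle \tilde U(\tilde e_j), \tilde U(\tilde e_l)\rangle_{\tilde\mu}.$$
By Proposition \ref{proposition_internal_spectrum_compat_scale}, $\tilde e_j, \tilde e_l\in \tilde H_{\tilde\sigma}$, so Proposition \ref{proposition_internal_isometry} (together with polarization, which works identically for $\mathbb{K}=\mathbb{R}$ and $\mathbb{K}=\mathbb{C}$) gives $\langle\tilde U(\tilde e_j),\tilde U(\tilde e_l)\rangle_{\tilde\mu}=\langle \tilde e_j, \tilde e_l\rangle$, whose standard part is $\langle e_j, e_l\rangle$.

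I do not expect any real obstacle: the sequence $(V_j)$ was constructed in Proposition \ref{proposition_measurable_sections} precisely so that this identity would fall out of the chain above, and every ingredient has already been assembled. The only subtlety worth flagging is the step of passing from $e_j\in H$ to $\tilde e_j\in \tilde H_{\tilde\sigma}\subseteq {}^*H$ via Lemma \ref{lemma:infinitesimalchange} before invoking $\tilde U$'s isometry property, since the latter only applies on $\tilde H_{\tilde\sigma}$ and one cannot directly assert that $e_j$ itself lies in this internal subspace.
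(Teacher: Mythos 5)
Your proposal is correct and follows essentially the same path as the paper: both establish the identity $\langle V_j,V_l\rangle_\mu=\langle e_j,e_l\rangle$ by the same chain (pass to $\nu^{e_j,e_l}(\mathbb{R})$, substitute $\tilde{e}_j,\tilde{e}_l$ via Lemma \ref{lemma:infinitesimalchange}, discard the infinite part of $\tilde\sigma$ by Proposition \ref{proposition_almostall_eigenvalues_finite}, pull out the standard part via S-integrability, and apply the internal isometry on $\tilde{H}_{\tilde\sigma}$), then conclude well-definedness and isometry on the span of $\{e_j\}$ and extend by density. The subtlety you flagged about needing $\tilde e_j$ rather than $e_j$ before invoking the internal isometry is exactly the role Lemma \ref{lemma:infinitesimalchange} plays in the paper's computation.
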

\begin{proof}
	For each $j,l\in\mathbb{N}$, we have
	\begin{align*} \langle V_j,V_l\rangle_{\mu}&=\int_{\mathbb{R}}\langle V_j,V_l\rangle d\mu=\int_{\mathbb{R}}U_{j,l}d\mu=\nu^{e_j,e_l}(\mathbb{R})\\
	&=\int_{\operatorname{Fin}({}^*\mathbb{R})\cap \tilde{\sigma}}\St\left(\left\langle\tilde{U}(\tilde{e}_j),\tilde{U}(\tilde{e}_l)\right\rangle\right)d\mu_L\\
    &=\int_{\tilde{\sigma}}\St\left(\left\langle\tilde{U}(\tilde{e}_j),\tilde{U}(\tilde{e}_l)\right\rangle\right)d\mu_L\\
	&=\St\left(\int_{\tilde{\sigma}}\left\langle\tilde{U}(\tilde{e}_j),\tilde{U}(\tilde{e}_l)\right\rangle d\tilde{\mu}\right)=\St(\langle\tilde{U}(\tilde{e}_j),\tilde{U}(\tilde{e}_l)\rangle_{\tilde{\mu}})\\
	&=\St(\langle\tilde{e_j},\tilde{e}_l\rangle)=\langle e_j,e_l\rangle.
	\end{align*}
    Note that the fourth equality used Lemma \ref{lemma:infinitesimalchange}, while the fifth followed from Proposition \ref{proposition_almostall_eigenvalues_finite} and the sixth used Theorem \ref{theorem_nearstandard_sintegrable}.\footnote{In the sequel, we will not comment on such applications of these results.}
	A particular consequence of this calculation is that each $V_j$ does indeed belong to  $\int_{\mathbb{R}}^{\oplus}H_td\mu(t)$. Another consequence is that, for any  sequence $a_1,\;\dots\;,a_k$ in $\mathbb{K}$, we have $\left\|\sum_{j=1}^k a_je_j\right\|=\left\|\sum_{j=1}^k a_jV_j\right\|_{\mu}.$ From that, we conclude that there is a unique linear isometry $U:\Span(\{e_j\}_{j\in\mathbb{N}})\rightarrow\int_{\mathbb{R}}^{\oplus}H_td\mu(t)$ such that for any $j$, $U(e_j)=V_j$. Furthermore, this isometry uniquely extends to all of $H$ by Property (\ref{scale_property_density}) of Definition \ref{definition_scale}.
    \end{proof}


We have almost completed the proof of Theorem \ref{theorem_spectral_direct}.  The only thing remaining to show is that $U$ indeed ``intertwines'' $A$ and the multiplication operator on the direct integral.  Note that, for all $j,l\in \mathbb{N}$, we have $\frac{d\nu^{e_j,e_l}}{d\mu}=\langle U(e_j),U(e_l)\rangle$.  The next proposition, which is key for linking $U$ and $A$, shows that this remains true for all $x,y\in H$.

\begin{prop}\label{proposition_isometry_derivatives}
    For all $x,y\in H$, we have $$\frac{d\nu^{x,y}}{d\mu}=\langle U(x), U(y)\rangle.$$
\end{prop}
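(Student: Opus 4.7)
The plan is to recognize both sides of the claimed equality as bounded sesquilinear forms from $H\times H$ into $L^1(\mu)$, observe that they agree on pairs of the form $(e_j,e_l)$ by construction, and extend the agreement to all of $H\times H$ by continuity and density.

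To set this up, I would first note that $(x,y)\mapsto \nu^{x,y}$ is sesquilinear as a map into the space of $\mathbb K$-signed measures on $\mathbb R$: linearity of $\tilde U$ and of the inner product in the first argument, conjugate-linearity in the second, and the fact that $\St$ commutes with finite $\mathbb K$-linear combinations of finite quantities give $\nu^{ax+x',\,by+y'}=a\overline{b}\,\nu^{x,y}+\cdots$, and taking Radon-Nikodym derivatives preserves these relations since each $\nu^{x,y}\ll\mu$. Sesquilinearity of $(x,y)\mapsto \langle U(x),U(y)\rangle$ is immediate from $U$ being linear and the inner product being sesquilinear fiberwise. For the norm bounds, since $\nu^{x,y}$ is the pushforward via $\St_{\tilde\sigma}$ of the $\mathbb K$-signed measure $\St\langle \tilde U(x),\tilde U(y)\rangle\,d\mu_L$, the inequality isolated in the remark following Theorem \ref{theorem_nearstandard_sintegrable} yields
$$\left\|\tfrac{d\nu^{x,y}}{d\mu}\right\|_{L^1(\mu)}=|\nu^{x,y}|(\mathbb R)\leq \int_{\tilde\sigma}\left|\St\langle \tilde U(x),\tilde U(y)\rangle\right|d\mu_L\leq \St\!\int_{\tilde\sigma}\left|\langle \tilde U(x),\tilde U(y)\rangle\right|d\tilde\mu\leq \|x\|\|y\|.$$
Cauchy-Schwarz in the direct integral, together with the isometry property $\|U(x)\|_\mu=\|x\|$, gives the analogous bound $\|\langle U(x),U(y)\rangle\|_{L^1(\mu)}\leq \|x\|\|y\|$. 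Both forms are therefore jointly continuous on $H\times H$ into $L^1(\mu)$.

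It remains to check agreement on basis vectors: $\frac{d\nu^{e_j,e_l}}{d\mu}=U_{j,l}$ by definition, and $\langle U(e_j)(t),U(e_l)(t)\rangle=\langle V_j(t),V_l(t)\rangle=U_{j,l}(t)$ for $\mu$-almost every $t$ by the defining property of the $V_j$ from Proposition \ref{proposition_measurable_sections}. Sesquilinearity then extends the equality to all pairs of finite linear combinations of the $e_j$, and density of $\Span\{e_j\}_{j\in\mathbb N}$ in $H$ (Property \ref{scale_property_density} of Definition \ref{definition_scale}) together with $L^1(\mu)$-continuity in each slot yields the full statement. I do not anticipate any serious obstacle here; the only point requiring a moment of care is the total-variation bound for a pushforward of a (possibly complex) $\mathbb K$-signed measure, which is a standard measure-theoretic fact.
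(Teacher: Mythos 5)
Your proof is correct and rests on the same core ingredients as the paper's: agreement of both sides on pairs $(e_j,e_l)$, the bound $\|x\|\,\|y\|$ coming from Theorem \ref{theorem_nearstandard_sintegrable} (and its analogue from Cauchy--Schwarz and the isometry property of $U$), and density of $\Span\{e_j\}_{j\in\mathbb N}$. The paper carries out the same density argument with explicit $\epsilon$-estimates for each fixed Borel set $B$, whereas you have packaged it more abstractly as an $L^1(\mu)$-continuity argument for bounded sesquilinear forms; the two formulations are interchangeable here.
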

	
	\begin{proof}
    Fix $x,y\in H$ and Borel $B\subseteq \mathbb{R}$.  We must show that $$\nu^{x,y}(B)=\int_B \langle U(x),U(y)\rangle d\mu.$$ Fix $\epsilon\in\mathbb{R}_{>0}$. Then, given some large enough $k$, let $x_{\epsilon}=\sum_{j=1}^k a_je_j$ and $y_{\epsilon}=\sum_{j=1}^k b_je_j$ such that $\|x-x_{\epsilon}\|<\frac{\epsilon}{4(\|y\|+1)}$ and  $\|y-y_{\epsilon}\|<\frac{\epsilon}{4(\|x\|+\epsilon)}.$ Then, we have
	\begin{align*}
		&\left|\nu^{x,y}(B)-\sum_{j,l=1}^{k}a_j\overline{b_l}\nu^{e_j,e_l}(B)\right|\\
		&=\left|\int_{\St^{-1}(B)}\St\left(\langle\tilde{U}(x),\tilde{U}(y)\rangle-\sum_{j,l=1}^{k}a_j\overline{b_l}\langle\tilde{U}(e_j),\tilde{U}(e_l)\rangle\right)d\mu_L\right|\\
		&\leq\St\left(\int_{\tilde{\sigma}}\left|\langle\tilde{U}(x),\tilde{U}(y)\rangle-\left\langle\tilde{U}(x_{\epsilon}),\tilde{U}(y_{\epsilon})\right\rangle\right|d\tilde{\mu}\right)\\
		&\leq\St\left(\int_{\tilde{\sigma}}\left|\left\langle\tilde{U}(x-x_{\epsilon}),\tilde{U}(y)\right\rangle\right|d\tilde{\mu}\right)+\St\left(\int_{\tilde{\sigma}}\left|\left\langle\tilde{U}(x_{\epsilon}),\tilde{U}(y-y_{\epsilon})\right\rangle\right|d\tilde{\mu}\right)\\
		&\leq\St\left(\|\tilde{U}(x-x_{\epsilon})\|_{\tilde{\mu}}\|\tilde{U}(y)\|_{\tilde{\mu}}\right)+\St\left(\|\tilde{U}(x_{\epsilon})\|_{\tilde{\mu}}\|\tilde{U}(y-y_{\epsilon})\|_{\tilde{\mu}}\right)\\
		&\leq\|x-x_{\epsilon}\|\;\|y\|+\|x_{\epsilon}\|\;\|y-y_{\epsilon}\|<\frac{1}{2}\epsilon.
	\end{align*}
	On the other hand, we have
	\begin{align*}
		&\left|\int_B\langle U(x),U(y)\rangle d\mu-\sum_{j,l=1}^{k}a_j\overline{b_l}\nu^{e_j,e_l}(B)\right|\\
		&=\left|\int_B\langle U(x),U(y)\rangle d\mu-\sum_{j,l=1}^{k}a_j\overline{b_l}\int_B\langle V_j,V_l\rangle d\mu\right|\\
		&=\left|\int_B\left(\langle U(x),U(y)\rangle-\langle\sum_{j=1}^{k}a_jU(e_j),\sum_{j=1}^{k}b_jU(e_j)\rangle\right) d\mu\right|\\
		&\leq\int_{\mathbb{R}}\left|\langle U(x),U(y)\rangle-\langle U(x_{\epsilon}),U(y_{\epsilon})\rangle \right|d\mu\\
		&\leq\int_{\mathbb{R}}\left|\langle U(x-x_{\epsilon}),U(y)\rangle\right|d\mu+\int_{\mathbb{R}}\left|\langle U(x_{\epsilon}),U(y-y_{\epsilon})\rangle\right|d\mu\\
        &\leq\|U(x-x_{\epsilon})\|_{\mu}\|U(y)\|_{\mu}+\|U(x_{\epsilon})\|_{\mu}\|U(y-y_{\epsilon})\|_{\mu}\\
		&=\|x-x_{\epsilon}\|\|y\|+\|x_{\epsilon}\|\|y-y_{\epsilon}\|<\frac{1}{2}\epsilon.
	\end{align*}
	It follows that $|\int_B\langle U(x),U(y)\rangle d\mu-\nu^{x,y}(B)|<\epsilon$. Since $\epsilon$ is arbitrary, we have $\int_B\langle U(x),U(y)\rangle d\mu=\nu^{x,y}(B)$, completing the proof.
\end{proof}

We can now prove the final piece of Theorem \ref{theorem_spectral_direct}.

\begin{thm}\label{theorem_conclude_spectral}
	For any $x\in\Dom(A)$, we have $(U(Ax))(t)=t\cdot (U(x))(t)$ for $\mu$-almost all $t\in\mathbb{R}$. In other words, for any such $x\in\Dom(A)$, $\Id_{\mathbb{R}}\cdot U(x)= U(Ax).$
\end{thm}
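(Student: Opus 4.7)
The strategy is to prove the intertwining by testing against the dense family $\{V_l\}_{l\in \mathbb{N}}$ and exploiting the internal identity $\tilde{U}(\tilde{A}\tilde{x})(\lambda)=\lambda\,\tilde{U}(\tilde{x})(\lambda)$ from Proposition \ref{proposition_internal_isometry}. First, I would invoke the quasi-sampling hypothesis to select $\tilde{x}\in\tilde{H}$ with $\St(\tilde{x})=x$ and $\St(\tilde{A}\tilde{x})=Ax$. The goal then reduces to showing that, for every $l\in\mathbb{N}$ and every Borel $B\subseteq\mathbb{R}$,
$$\int_B\langle U(Ax),V_l\rangle\,d\mu=\int_B t\cdot\langle U(x),V_l\rangle(t)\,d\mu(t).$$

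To establish this, I would rewrite the left-hand side using Proposition \ref{proposition_isometry_derivatives} together with Lemma \ref{lemma:infinitesimalchange} as
$$\int_B\langle U(Ax),V_l\rangle\,d\mu=\nu^{Ax,e_l}(B)=\int_{\St_{\tilde{\sigma}}^{-1}(B)}\St\langle\tilde{U}(\tilde{A}\tilde{x}),\tilde{U}(\tilde{e}_l)\rangle\,d\mu_L.$$
The internal identity collapses the integrand to $\St\bigl(\lambda\cdot\langle\tilde{U}(\tilde{x}),\tilde{U}(\tilde{e}_l)\rangle(\lambda)\bigr)$. By Theorem \ref{theorem_nearstandard_sintegrable} the factor $\langle\tilde{U}(\tilde{x}),\tilde{U}(\tilde{e}_l)\rangle$ is finite $\mu_L$-a.e., while Proposition \ref{proposition_almostall_eigenvalues_finite} tells us that $\lambda$ is finite $\mu_L$-a.e., so the standard part distributes over the product. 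Pushing forward along $\St_{\tilde{\sigma}}$ and applying Proposition \ref{proposition_isometry_derivatives} at the pair $(x,e_l)$ then yields the right-hand side
$$\int_{\St_{\tilde{\sigma}}^{-1}(B)}\St(\lambda)\cdot\St\langle\tilde{U}(\tilde{x}),\tilde{U}(\tilde{e}_l)\rangle\,d\mu_L=\int_B t\,d\nu^{x,e_l}(t)=\int_B t\cdot\langle U(x),V_l\rangle(t)\,d\mu(t).$$

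The final step is to pass from this family of Borel-set identities to the pointwise conclusion. For each $l\in\mathbb{N}$ separately, we get $\langle U(Ax)(t),V_l(t)\rangle=t\langle U(x)(t),V_l(t)\rangle$ for $\mu$-a.e.\ $t$; intersecting over $l\in\mathbb{N}$ (a countable operation) produces a single $\mu$-conull set on which all these equalities hold simultaneously. On this set, both $U(Ax)(t)$ and $t\cdot U(x)(t)$ belong to $H_t$ (the latter because $H_t$ is a closed linear subspace of $\ell_2^{\mathbb{K}}$), and they share inner products with the dense family $\{V_l(t)\}_{l\in\mathbb{N}}$ in $H_t$, so they coincide.

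The main obstacle, or at least the only delicate point, is the justification that $\St$ distributes over the product $\lambda\cdot\langle\tilde{U}(\tilde{x}),\tilde{U}(\tilde{e}_l)\rangle(\lambda)$: this is not true pointwise but only $\mu_L$-a.e., and it needs both the S-integrability of the $\tilde{U}$-factor (Theorem \ref{theorem_nearstandard_sintegrable}) and the $\mu_L$-a.e.\ finiteness of $\lambda$ (Proposition \ref{proposition_almostall_eigenvalues_finite}). Once this point is granted, the remainder is a straightforward chain of identifications via the pushforward change of variables and the Radon--Nikodym formula of Proposition \ref{proposition_isometry_derivatives}.
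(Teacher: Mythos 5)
Your proof is correct and follows essentially the same approach as the paper's: the identical chain of identifications via Proposition \ref{proposition_isometry_derivatives}, the internal eigenvalue identity from Proposition \ref{proposition_internal_isometry}, and Lemma \ref{lemma:infinitesimalchange}/Corollary \ref{corollary_isometry_infinitesimal_perturbation}, run in the opposite direction, with the same reduction to testing against the dense family $\{V_l\}$ (and your remark on where the standard part distributes is exactly the implicit step the paper glosses over). The only minor refinement the paper makes is to restrict to \emph{bounded} Borel sets $B$, which makes the $\mu$-integrability of $\Id_{\mathbb{R}}\cdot\langle U(x),V_l\rangle$ on $B$ immediate; your direction of the chain recovers this integrability a posteriori from the S-integrability of $\langle\tilde{U}(\tilde{A}\tilde{x}),\tilde{U}(\tilde{e}_l)\rangle$, but stating the restriction keeps the pushforward change-of-variables step manifestly well-posed and is still enough to conclude the $\mu$-a.e.\ pointwise identity.
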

\begin{proof}
	Since $\{V_j(t)\}_{j\in\mathbb{N}}$ is dense-spanning in $H_t$, we only need to show that, for all $j\in\mathbb{N}$, we have $\langle(U(Ax))(t),V_j(t)\rangle=\langle t(U(x))(t),V_j(t)\rangle$ for $\mu$-almost all $t\in \mathbb{R}$. Thus, it is sufficient to show, for any bounded Borel set $B\subseteq \mathbb{R}$, that $$\int_B\left(\langle U(Ax),V_j\rangle-\Id_{\mathbb{R}}\cdot\langle U(x),V_j\rangle\right)d\mu=0,$$ noting that the function is indeed integrable since $B$ is bounded. However, by Proposition \ref{proposition_internal_isometry}, Corollary \ref{corollary_isometry_infinitesimal_perturbation}, and Proposition \ref{proposition_isometry_derivatives}, we have that
	\begin{align*}
		\int_B \Id_{\mathbb{R}}\cdot\langle U(x),V_j\rangle d\mu&=\int_B \Id_{\mathbb{R}}d\nu^{x,e_j}\\ 
        &=\int_{\St^{-1}(B)}(\Id_{\mathbb{R}}\circ\St_{\tilde{\sigma}})\cdot\St(\langle\tilde{U}(x),\tilde{U}(e_j)\rangle)d\mu_L \\
        &=\int_{\St^{-1}(B)}\St(\lambda)\cdot\St(\langle\tilde{U}(\bar{x}),\tilde{U}(\tilde{e}_j)\rangle(\lambda))d\mu_L(\lambda)\\
		&=\int_{\St^{-1}(B)}\St(\lambda\langle(\tilde{U}(\bar{x}))(\lambda),(\tilde{U}(\tilde{e}_j))(\lambda)\rangle)d\mu_L(\lambda)\\
		&=\int_{\St^{-1}(B)}\St(\langle(\tilde{U}(\tilde{A}\bar{x}))(\lambda),(\tilde{U}(\tilde{e}_j))(\lambda)\rangle)d\mu_L(\lambda)\\
		&=\int_{\St^{-1}(B)}\St(\langle \tilde{U}(\tilde{A}\bar{x}),\tilde{U}(\tilde{e}_j)\rangle)d\mu_L=\nu^{Ax,e_j}(B)\\
		&=\int_B\langle U(Ax),V_j\rangle d\mu.
	\end{align*}
	Here, $\bar{x}$ is given by Property (\ref{sampling_property_approximation}) of Definition \ref{definition_sampling} applied to $x$.  Therefore, as stated, we conclude that $t(U(x))(t)=(U(Ax))(t)$ for $\mu$-almost all $t\in\mathbb{R}$, concluding the proof. 
\end{proof}

\section{Self-Adjointness and Surjectivity of the Isometry}
Up until now, we have only assumed that $A$ is a symmetric operator on $H$. As a result, the resulting isometry $U$ cannot be expected to be unitary if $A$ does not have a self-adjoint extension. A significant point of departure from the results given in \cite{nonez2024spectralequivalencesnonstandardsamplings} is that the mere fact that $A$ is self-adjoint will guarantee that the resulting $U$ is surjective. In this section, we in fact prove a bit more.

First, we establish a general fact about quasi-samplings.

\begin{prop}
	We have $\St(G(\tilde{A}))\subset H\times H$ is the graph of a closed symmetric extension of $A$.
\end{prop}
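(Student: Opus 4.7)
The plan is to show that $\St(G(\tilde A))$ is the graph of the operator $\hat A$ with domain
$$\Dom(\hat A):=\{\St(\bar x):\bar x\in\tilde H\text{ is nearstandard and }\tilde A\bar x\text{ is nearstandard}\}$$
and rule $\hat A(\St(\bar x)):=\St(\tilde A\bar x)$, after which I verify that $\hat A$ is linear, symmetric, closed, and extends $A$. The inclusion $A\subseteq\hat A$ is immediate from Property (\ref{sampling_property_approximation}) of Definition \ref{definition_sampling}; once well-definedness is secured, linearity follows since sums and scalar multiples of lifts of domain elements remain lifts.

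The key technical point is well-definedness of $\hat A$. Suppose $\bar x_1,\bar x_2\in\tilde H$ are nearstandard with $\St(\bar x_1)=\St(\bar x_2)$ and both $\tilde A\bar x_i$ nearstandard; letting $y:=\bar x_1-\bar x_2$ (infinitesimal) and $w:=\St(\tilde A\bar x_1)-\St(\tilde A\bar x_2)=\St(\tilde A y)\in H$, the goal is $w=0$. For any $v\in\Dom(A)$, Property (\ref{sampling_property_approximation}) of Definition \ref{definition_sampling} yields $\bar v\in\tilde H$ with $\St(\bar v)=v$ and $\St(\tilde A\bar v)=Av$. Internal symmetry of $\tilde A$ gives $\langle\tilde A y,\bar v\rangle=\langle y,\tilde A\bar v\rangle$, and the right-hand side is infinitesimal since $y$ is infinitesimal and $\tilde A\bar v$ is finite; taking standard parts yields $\langle w,v\rangle=0$. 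Density of $\Dom(A)$ in $H$ then forces $w=0$. Symmetry of $\hat A$ follows by taking standard parts of the internal identity $\langle\tilde A\bar x_1,\bar x_2\rangle=\langle\bar x_1,\tilde A\bar x_2\rangle$ applied to arbitrary lifts.

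For closedness, assume $(u_n)_{n\in\mathbb N}\subseteq\Dom(\hat A)$ satisfies $u_n\to u$ and $\hat A u_n\to w$ in $H$, and for each $n$ pick a lift $\bar x_n\in\tilde H$ of $u_n$ with $\tilde A\bar x_n\simeq\hat A u_n$. For each $k\in\mathbb N$, the internal set
$$S_k:=\{z\in\tilde H:\|z-u\|\le 1/k\text{ and }\|\tilde A z-w\|\le 1/k\}$$
is nonempty: for any standard $n$ with $\|u_n-u\|<1/(2k)$ and $\|\hat A u_n-w\|<1/(2k)$, the triangle inequality places $\bar x_n\in S_k$. By $\aleph_1$-saturation applied to the decreasing sequence $(S_k)_{k\in\mathbb N}$, pick $\bar x\in\bigcap_k S_k$; then $\bar x\simeq u$ and $\tilde A\bar x\simeq w$, so $u\in\Dom(\hat A)$ and $\hat A u=w$. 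The main obstacle throughout is the well-definedness step; the rest reduces to routine manipulation once the use of symmetry against lifts of $\Dom(A)$ is in place.
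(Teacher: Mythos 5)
Your proposal is correct and follows essentially the same route as the paper: the well-definedness step uses the identical trick of pairing lifts against lifts of elements of $\Dom(A)$ via internal symmetry of $\tilde A$ and then invoking density of $\Dom(A)$, and the symmetry verification is the same one-line standard-part computation. The only cosmetic differences are that the paper phrases well-definedness as showing $\langle y_1,z\rangle = \langle x, Az\rangle = \langle y_2, z\rangle$ for $z\in\Dom(A)$ while you subtract first and show the difference pairs to zero, and that the paper invokes closedness of $\St(G(\tilde A))$ as a known consequence of $\aleph_1$-saturation whereas you unpack that saturation argument explicitly with the decreasing internal sets $S_k$; both are standard and equivalent.
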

\begin{proof}
 It is clear that $\St(G(\tilde{A}))$ is a linear $\mathbb{K}$-subspace of $H\times H$. Furthermore, since ${}^*$ is $\aleph_1$-saturated, $\St(G(\tilde{A}))$ is closed. We next show that it is the graph of an operator. 
 
 First, to show that it is the graph of a function, take $(x,y_1),(x, y_2)\in \St(G(\tilde{A}))$; we must show that $y_1=y_2$. Take $\bar{x}_1,\bar{x}_2 \in\tilde{H}$ such that $\St((\bar{x}_1,\tilde{A}\bar{x}_1))=(x,y_1)$ and $\St((\bar{x}_2,\tilde{A}\bar{x}_2))=(x,y_2)$. Fix $z\in\Dom(A)$ and take $\bar{z}\in\tilde{H}$ satisfying Property (\ref{sampling_property_approximation}) of Definition \ref{definition_sampling}; we then have 
 \begin{align*}
 	\langle y_1,z\rangle=\St(\langle\tilde{A}\bar{x}_1,\bar{z}\rangle)=\St(\langle\bar{x}_1,\tilde{A}\bar{z}\rangle)=\langle x,Az\rangle=\St(\langle\bar{x}_2,\tilde{A}\bar{z}\rangle)=\St(\langle\tilde{A}\bar{x}_2,\bar{z}\rangle)=\langle y_2,z\rangle.
 \end{align*}
 Since $\Dom(A)$ is dense in $H$, we conclude that $y_1=y_2$. Thus, $\St(G(\tilde{A}))$ is the graph of a function, which we call $\hat{A}$.  Since $A\subset \hat{A}$ by Property (\ref{sampling_property_approximation}) of Definition \ref{definition_sampling}, we know that $\hat{A}$ is densely defined, and since $G(\hat{A})=\St(G(\tilde{A}))$ is a linear closed subspace of $H\times H$, we know that $\hat{A}$ is a linear and closed operator. 
 
 To conclude the proof, we show that $\hat{A}$ is symmetric. Fix $x,z \in\Dom(\hat{A})$ and take $\bar{x}, \bar{z}\in \tilde{H}$ such that $\St((\bar{x},\tilde{A}\bar{x}))=(x,\hat{A}x)$ and $\St((\bar{z},\tilde{A}\bar{z}))=(z,\hat{A}z).$ We then have that
 \begin{align*}
 	\langle\hat{A}x,z\rangle=\St(\langle\tilde{A}\bar{x},\bar{z}\rangle)=\St(\langle\bar{x},\tilde{A}\bar{z}\rangle)=\langle x,\hat{A}z\rangle.
 \end{align*}
We conclude that $\hat{A}$ is symmetric.
\end{proof}

In what follows, we continue to let $\hat{A}$ denote the closed symmetric extension of $A$ whose graph is given by $\St(G(\tilde{A}))$.

\begin{rmk}\label{remark_sampling_extension}
	Note that $(\tilde{H}, \tilde{A})$ is also a quasi-sampling for $\hat{A}$.
\end{rmk}

We can now state the main result of this section.
\begin{thm}\label{theorem_surjectivity}
	Letting $U:H\rightarrow\int_{\mathbb{R}}^{\oplus}H_td\mu$ be the isometry defined above, we have that $U$ is surjective if and only if $\hat{A}$ is self-adjoint.
\end{thm}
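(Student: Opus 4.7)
The plan is to prove the two implications separately, exploiting the intertwining $U\hat{A}\subseteq TU$ (where $T$ is the multiplication operator on $K:=\int_{\mathbb{R}}^{\oplus}H_{t}d\mu$) together with the explicit description of $K$ via the sections $V_{j}=U(e_{j})$. The argument should be uniform in the real and complex scalar cases.

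For the direction $\hat{A}$ self-adjoint $\Rightarrow$ $U$ surjective, I would use that self-adjointness of $\hat{A}$ furnishes both $(I+\hat{A}^{2})^{-1}$ and $\hat{A}(I+\hat{A}^{2})^{-1}$ as bounded operators on $H$ without invoking any functional calculus (boundedness of the latter comes from the elementary inequality $\|\hat{A}z\|^{2}\leq\tfrac{1}{2}\|(I+\hat{A}^{2})z\|^{2}$, valid on $\Dom\hat{A}^{2}$ by symmetry). The intertwining propagates to $U(I+\hat{A}^{2})^{-1}=(I+T^{2})^{-1}U$ and $U\hat{A}(I+\hat{A}^{2})^{-1}=T(I+T^{2})^{-1}U$, so the closed subspace $U(H)\subseteq K$ is invariant under multiplication by the two bounded continuous functions $(1+t^{2})^{-1}$ and $t(1+t^{2})^{-1}$. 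These separate points of $\mathbb{R}$ and vanish at infinity, so Stone--Weierstrass places the algebra they generate densely in $C_{0}(\mathbb{R})$; a bounded-pointwise/Dominated-Convergence argument then promotes the invariance to multiplication by $\chi_{B}$ for every bounded Borel $B\subseteq\mathbb{R}$. Since $V_{j}\in U(H)$, each $\chi_{B}V_{j}\in U(H)$, and because $\{V_{j}(t)\}_{j\in\mathbb{N}}$ is dense in $H_{t}$ for $\mu$-a.e.\ $t$, the $\chi_{B}V_{j}$ span a dense subspace of $K$ (via the induced measurable structure from the Appendix). Hence $U(H)=K$.

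For the direction $U$ surjective $\Rightarrow$ $\hat{A}$ self-adjoint, set $B:=U^{-1}TU$; unitarity of $U$ makes $B$ self-adjoint on $H$, and the intertwining gives $\hat{A}\subseteq B$, so the task reduces to $\Dom B\subseteq\Dom\hat{A}$. Fix $x\in\Dom B$. For each $n\in\mathbb{N}$, pick $r_{n}>n$ with $\mu(\{\pm r_{n}\})=0$ (possible since $\mu$ has at most countably many atoms) and set $Y_{n}:=\chi_{[-r_{n},r_{n}]}\cdot Ux$. Dominated Convergence gives $Y_{n}\to Ux$ and $TY_{n}\to TUx$ in $K$, so $x_{n}:=U^{-1}Y_{n}\to x$ with $U^{-1}TY_{n}\to Bx$; closedness of $\hat{A}$ reduces the problem to showing each $x_{n}\in\Dom\hat{A}$ with $\hat{A}x_{n}=U^{-1}TY_{n}$. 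To witness this, take $\bar{x}_{n}:=\Proj_{\tilde{H}_{\tilde{\sigma}}}({}^{*}x_{n})\in\tilde{H}_{\tilde{\sigma}}$, which satisfies $\St\bar{x}_{n}=x_{n}$ by density of $\{\St\tilde{e}_{j}\}$, and set $\bar{x}_{n}':=\Proj_{\tilde{H}_{\tilde{\sigma}\cap {}^{*}[-r_{n},r_{n}]}}\bar{x}_{n}$, so that $\|\tilde{A}\bar{x}_{n}'\|\leq r_{n}\|\bar{x}_{n}'\|$ is finite. The crucial estimate $\|\bar{x}_{n}-\bar{x}_{n}'\|\simeq 0$ follows from the internal isometry of $\tilde{U}$ on $\tilde{H}_{\tilde{\sigma}}$, S-integrability of $\|\tilde{U}(\bar{x}_{n})\|^{2}$ (Theorem \ref{theorem_nearstandard_sintegrable}), and the pushforward identity $\nu^{x_{n},x_{n}}(B)=\int_{B}\|Ux_{n}\|^{2}d\mu$ (Proposition \ref{proposition_isometry_derivatives}) applied to $B=\mathbb{R}\setminus[-r_{n},r_{n}]$: the integrand vanishes there, and the atom-free choice of $r_{n}$ neutralizes boundary contributions at $\pm r_{n}$. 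A matrix-element calculation in the style of the proof of Theorem \ref{theorem_conclude_spectral} (pairing $\tilde{A}\bar{x}_{n}'$ against each $\tilde{e}_{j}$ through $\tilde{U}$, then pushing forward) then identifies $\St\tilde{A}\bar{x}_{n}'=U^{-1}TY_{n}$, placing $x_{n}\in\Dom\hat{A}$.

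The harder direction is the second one, and the main technical obstacle within it is precisely the infinitesimal estimate $\|\bar{x}_{n}-\bar{x}_{n}'\|\simeq 0$: one must carefully reconcile the internal spectral window $\tilde{\sigma}\cap{}^{*}[-r_{n},r_{n}]$ with the support of $Y_{n}$ in $\mathbb{R}$ via the standard-part map on $\tilde{\sigma}$ and the S-integrability machinery of Section 4, relying on the atom-free choice of $r_{n}$ to actually control the behavior near $\pm r_{n}$ where the internal and external cutoffs need not coincide.
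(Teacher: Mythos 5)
For the direction where surjectivity of $U$ implies $\hat{A}$ self-adjoint, your argument is essentially the paper's own: set $B=U^{-1}TU$, reduce to $\Dom B\subseteq\Dom\hat{A}$, truncate $Ux$ on a window whose endpoints are chosen non-atomic, and realize the truncated pair inside $\St(G(\tilde{A}))$ by projecting onto $\tilde{H}_{I_n}$. There is, however, a gap in the final step. Pairing $\tilde{A}\bar{x}_n'$ against each $\tilde{e}_j$ and pushing forward only controls $\St\langle\tilde{A}\bar{x}_n',y\rangle$ for $y$ in a dense set; since $\tilde{A}\bar{x}_n'$ is only known to have finite norm and is a priori not nearstandard, this does not by itself determine its standard part. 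You must also establish the norm identity $\St\|\tilde{A}\bar{x}_n'\|^2=\|U^{-1}TY_n\|^2$ — exactly the paper's step (1) — and then combine the two to force $\|\tilde{A}\bar{x}_n'-{}^*\big(U^{-1}TY_n\big)\|\simeq 0$. This norm calculation runs parallel to the one showing $\bar{x}_n'\simeq x_n$, so the gap is fixable, but it cannot be omitted.

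For the direction where self-adjointness of $\hat{A}$ implies surjectivity of $U$, your route is genuinely different from the paper's. The paper works only with polynomial multiplications and the truncation $Z_n=\mathbf{1}_{[-n,n]}X$, showing by induction (its Claim 2) that $T^kX_n=\Proj_{U(H)}T^kZ_n$, using nothing beyond the graph definition of the adjoint; it then approximates $\mathbf{1}_{[a,b]}$ by polynomials bounded on $[-n,n]$. You instead introduce the resolvent-type operators $(I+\hat{A}^2)^{-1}$ and $\hat{A}(I+\hat{A}^2)^{-1}$, intertwine them with the multiplications by $(1+t^2)^{-1}$ and $t(1+t^2)^{-1}$, and invoke Stone--Weierstrass in $C_0(\mathbb{R})$ followed by dominated convergence to reach $\mathbf{1}_{[a,b]}$; the remaining density argument for $\{\mathbf{1}_{[a,b]}V_j\}$ matches the paper's Claim 1. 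This is a clean alternative, but it leans on an input the paper deliberately avoids: your inequality $\|\hat{A}z\|^2\le\tfrac12\|(I+\hat{A}^2)z\|^2$ establishes boundedness of $\hat{A}(I+\hat{A}^2)^{-1}$ on the \emph{range} of $I+\hat{A}^2$, but you still need $I+\hat{A}^2$ to be surjective from $\Dom(\hat{A}^2)$ onto all of $H$ (and $\Dom(\hat{A}^2)$ to be dense). Over $\mathbb{C}$ this follows from bijectivity of $\hat{A}\pm i$, i.e.\ precisely the Cayley-transform machinery the paper sets out to bypass, and that route is unavailable over $\mathbb{R}$. The field-uniform justification is von Neumann's theorem that $I+S^*S$ is bijective with bounded inverse for closed densely defined $S$. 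That theorem is elementary and does not require the spectral theorem, so your proof is correct, but you should state this dependence explicitly; as written, the existence of $(I+\hat{A}^2)^{-1}$ is asserted rather than proved. What your approach buys is a shorter, more conceptual argument; what the paper's buys is staying strictly inside its declared toolkit of the finite-dimensional spectral theorem plus the bare definition of the adjoint.
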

\begin{rmk}
	As a consequence of the previous theorem, we have that if $A$ is essentially self-adjoint, then any choice of the quasi-sampling and scale induces that $\hat{A}=\overline{A}$ and $U$ is surjective. 
\end{rmk}
Before we begin the proof of the previous theorem, we need a definition.
\begin{defn}
	For a measurable function $f:\mathbb{R}\rightarrow\mathbb{R}$, we define the operator $T_f$ on $\int_{\mathbb{R}}^{\oplus}H_td\mu(t)$ with $\Dom(T_f)=\{X\in\int_{\mathbb{R}}^{\oplus}H_td\mu(t)\;:\; \int_{\mathbb{R}}|f(t)|^2\|X(t)\|^2d\mu(t)<\infty  \}$ by $(T_f(X))(t)=f(t)\cdot X(t)$. Furthermore, we set $T=T_{\Id_{\mathbb{R}}}$.
\end{defn}
\begin{rmk}
	It is well-known that, for any measurable function $f$, $T_f$ is a self-adjoint operator. 
\end{rmk}
\begin{rmk}
	The statement of Theorem \ref{theorem_conclude_spectral} can now be stated as $U\circ A \subset T\circ U$. We can conclude from Remark \ref{remark_sampling_extension} that $U\circ \hat{A} \subset T\circ U$ holds as well.
\end{rmk}
\begin{proof}[Proof of Theorem \ref{theorem_surjectivity}]
	Fist suppose that $U$ is surjective; we show that $\hat A$ is self-adjoint.  Set $\hat{T}=U^{-1}\circ T\circ U$ as an operator on $H$. Since $U$ is unitary and $T$ is self-adjoint, we have that $\hat{T}$ is self-adjoint. Furthermore, from the previous remark, we have that $\hat{A}\subset \hat{T}.$ Thus, to conclude that $\hat{A}$ is self-adjoint, it is sufficient to show that $\Dom(\hat{A})\supset \Dom(\hat{T})=U^{-1}(\Dom(T)).$
	
	Suppose that $x\in U^{-1}(\Dom(T))$, so that $$\int_{\mathbb{R}}|t|^2\|(U(x))(t)\|^2d\mu<\infty.$$  We aim to show that $x\in \Dom(\hat A)$.
	
	Let $(a_n)_{n\in\mathbb{Z}}$ be an integer-indexed sequence in $\mathbb{R}$ such that, for each $n\in\mathbb{Z}$,  we have $a_n\in[n,n+1]$ and $\mu(\{a_n\})=0$. For $n\in\mathbb{N}$, set $$x_n=(U^{-1}\circ T_{\mathbf{1}_{[a_{-n},a_n]}}\circ U)(x),$$ so that $U(x_n)=\mathbf{1}_{[a_{-n},a_n]}\cdot U(x)$. Since $\int_{\mathbb{R}}|\Id_{\mathbb{R}}|^2\|U(x_n)\|^2d\mu\leq(n+1)^2\|U(x_n)\|_{\mu}^2$, we have that $x_n\in\Dom(T\circ U).$ Furthermore, by the Monotone Convergence Theorem, we have that $x_n\rightarrow x$ and $\hat{T}x_n\rightarrow \hat{T}x$ as $n\rightarrow\infty.$ Thus, since $\hat{A}$ is closed, in order to conclude that $x\in\Dom(\hat{A})$, it is sufficient to show that $(x_n, \hat{T}x_n)\in G(\hat{A})$ for all $n\in \mathbb{N}$.
	
	Towards that end, fix $n\in \mathbb{N}$ and set $I_n={}^*[a_{-n},a_n]\cap\tilde{\sigma}$ and $\bar{x}_n=\Proj_{\tilde{H}_{I_n}}x_n\in\tilde{H}.$ It will suffice to show that $\bar{x}_n\simeq x_n$ and $\tilde{A}\bar{x}_n\simeq \hat{T}x_n$.  Indeed, if we prove these two facts, we will have $(x_n,\hat{T}x_n)\in\St(G(\tilde{A}))=G(\hat{A})$, as desired.
    
    To see that $\bar{x}_n\simeq x_n$, first note that $I_n\subseteq \St_{\tilde{\sigma}}^{-1}([a_{-n},a_n])\subseteq I_n\cup\; \St_{\tilde{\sigma}}^{-1}(\{a_{-n},a_n\}) $, whence $I_n$ and $\St_{\tilde{\sigma}}^{-1}([a_{-n},a_n])$ have a $\mu_L$-null-measure symmetric difference.  As a result, we have
	\begin{align*}
		\St\left(\|\bar{x}_n\|^2\right)&=\St\left(\sum_{\lambda\in I_n}\|\Proj_{\tilde{H}_{\lambda}}x_n\|^2\right)\\
		&=\St\left(\int_{I_n}\|\tilde{U}(x_n)\|^2d\tilde{\mu}\right)\\
		&=\int_{I_n}\St\left(\|\tilde{U}(x_n)\|^2\right)d\mu_L\\
		&=\int_{\St_{\tilde{\sigma}}^{-1}([a_{-n},a_n])}\St\left(\|\tilde{U}(x_n)\|^2\right)d\mu_L\\
		&=\nu^{x_n,x_n}([a_{-n},a_n])\\
		&=\int_{[a_{-n},a_n]}\|U(x_n)\|^2d\mu\\
		&=\int_{\mathbb{R}}\|U(x_n)\|^2d\mu=\|U(x_n)\|_{\mu}^2=\|x_n\|^2.
	\end{align*}
	Thus, since $\|x_n\|^2=\|x_n-\bar{x}_n\|^2+\|\bar{x}_n\|^2$, we have that $\bar{x}_n\simeq x_n.$
	
	We next show that $\tilde{A}\bar{x}_n\simeq \hat{T}x_n$. To do this, it is sufficent to show that:  (1) $\St(\|\tilde{A}\bar{x}_n\|^2)=\|\hat{T}x_n\|^2$, and (2) $\St(\langle \tilde{A}\bar{x}_n,y\rangle)=\langle\hat{T}x_n,y\rangle$ for all $y\in H$. Indeed, (1) and (2) imply that $$\|\tilde{A}\bar{x}_n\|^2=\|\tilde{A}\bar{x}_n-\hat{T}x_n\|^2+\|\hat{T}x_n\|^2+2\operatorname{Re}(\langle\tilde{A}\bar{x}_n-\hat{T}x_n,\hat{T}x_n\rangle),$$ which leads to $0=\St(\|\tilde{A}\bar{x}_n-\hat{T}x_n\|^2)$ by taking standard parts on both sides.
	
	To prove (1), we note that since $|\Id_{\tilde{\sigma}}|^2\cdot \mathbf{1}_{I_n}$ is always bounded by $(n+1)^2$, the function $|\Id_{\tilde{\sigma}}|^2\cdot\mathbf{1}_{I_n}\cdot\|\tilde{U}(x_n)\|^2$ is S-integrable. As such, we have:
\begin{align*}
	\St\left(\|\tilde{A}\bar{x}_n\|^2\right)&=\St\left(\sum_{\lambda\in I_n}|\lambda|^2 \|\Proj_{\tilde{H}_{\lambda}}x_n\|^2\right)\\
	&=\St\left(\int_{I_n}|\Id_{\tilde{\sigma}}|^2\cdot\|\tilde{U}(x_n)\|^2d\tilde{\mu}\right)\\
	&=\int_{I_n}\St\left(|\Id_{\tilde{\sigma}}|^2\cdot\|\tilde{U}(x_n)\|^2\right)d\mu_L\\
	&=\int_{\St_{\tilde{\sigma}}^{-1}([a_{-n},a_n])}|\St_{\tilde{\sigma}}|^2\cdot\St\left(\|\tilde{U}(x_n)\|^2\right)d\mu_L\\
	&=\int_{[a_{-n},a_n]}|\Id_{\mathbb{R}}|^2d\nu^{x_n,x_n}\\
	&=\int_{[a_{-n},a_n]}|\Id_{\mathbb{R}}|^2\cdot\|U(x_n)\|^2d\mu\\
	&=\int_{\mathbb{R}}\|T(U(x_n))\|^2d\mu=\|T(U(x_n))\|_{\mu}^2=\|\hat{T}x_n\|^2.
\end{align*}
To prove (2), fix $y\in H$. For the same reasons as earlier, we have that the internal map $\Id_{\tilde{\sigma}}\cdot\mathbf{1}_{I_n}\cdot\langle \tilde{U}(x_n),\tilde{U}(y)\rangle$ is S-integrable. Thus,
\begin{align*}
	\St\left(\langle\tilde{A}\bar{x}_n,y\rangle\right)&=\St\left(\sum_{\lambda\in I_n}\lambda \langle\Proj_{\tilde{H}_{\lambda}}x_n,y\rangle\right)\\&=\St\left(\sum_{\lambda\in I_n}\lambda \langle\Proj_{\tilde{H}_{\lambda}}x_n,\Proj_{\tilde{H}_{\lambda}}y\rangle\right)\\
	&=\St\left(\int_{I_n}\Id_{\tilde{\sigma}}\cdot\langle \tilde{U}(x_n),\tilde{U}(y)\rangle d\tilde{\mu}\right)\\
	&=\int_{I_n}\St\left(\Id_{\tilde{\sigma}}\cdot\langle \tilde{U}(x_n),\tilde{U}(y)\rangle\right)d\mu_L\\
	&=\int_{\St_{\tilde{\sigma}}^{-1}([a_{-n},a_n])}\St_{\tilde{\sigma}}\cdot\St\left(\langle \tilde{U}(x_n),\tilde{U}(y)\rangle\right)d\mu_L\\
	&=\int_{[a_{-n},a_n]}\Id_{\mathbb{R}}d\nu^{x_n,y}\\
	&=\int_{[a_{-n},a_n]}\Id_{\mathbb{R}}\cdot\langle U(x_n),U(y)\rangle d\mu\\
	&=\int_{\mathbb{R}}\langle T(U(x_n)), U(y)\rangle d\mu\\
	&=\langle T(U(x_n)),U(y)\rangle_{\mu}=\langle\hat{T}x_n,y\rangle.
\end{align*}

This concludes the proof of the forwards direction of the theorem.  For the converse direction, suppose that $\hat{A}$ is self-adjoint; we aim to show that $U$ is surjective.  We begin with the following:

\

\noindent \textbf{Claim 1:}  To prove that $U$ is surjective, it suffices to show that, for any $a,b\in \mathbb{R}$ with $a<b$, we have $\mathbf{1}_{[a,b]}\cdot X\in U(H)$ whenever $X\in U(H)$.

\

\noindent \textbf{Proof of Claim 1:}  Since $V_j=U(e_j)$ for all $j\in \mathbb{N}$, it suffices to establish that $\{\mathbf{1}_{[a,b]}\cdot V_j\;|\; j\in\mathbb{N},\; a,b\in\mathbb{R}\; a<b\}$ spans a dense subset of $\int_{\mathbb{R}}^{\oplus}H_td\mu(t)$.  To see this, suppose that $X\in\int_{\mathbb{R}}^{\oplus}H_td\mu(t) $ is such that $\langle X,\mathbf{1}_{[a,b]}\cdot V_j\rangle_{\mu}=0$ for any $j\in\mathbb{N}$ and $a,b\in\mathbb{R}$ with $a<b$; we need to show that $X=0$. For any $j\in \mathbb{N}$, we then have, for all closed intervals $[a,b]\subseteq \mathbb{R}$, that

$$\int_{[a,b]}\langle X,V_j\rangle d\mu=0.$$

Since the collection of closed intervals generates the Borel $\sigma$-algebra on $\mathbb{R}$, we have that $\langle X(t),V_j(t)\rangle=0$ for $\mu$-almost all $t\in \mathbb{R}$. Thus, there is a Borel set $M\subset\mathbb{R}$ such that $\mu(\mathbb{R}\setminus M)=0$ and such that, for all $t\in M$ and for all $j\in\mathbb{N}$, we have $\langle X(t),V_j(t)\rangle=0.$ By definition of $H_t$, we have that for all $t\in M$, we have $X(t)=0$. It follows that $X=0$, as desired, finishing the proof of the claim.

We now fix $X\in U(H)$ and $a,b\in\mathbb{R}$ with $a<b$; we aim to show that $\mathbf{1}_{[a,b]}\cdot X\in U(H)$.  For $n\in\mathbb{N},$ set $Z_n=\mathbf{1}_{[-n,n]}\cdot X$ and $X_n=\Proj_{U(H)}Z_n$. We note that $$\lim_{n\rightarrow\infty} \mathbf{1}_{[a,b]}\cdot X_n={1}_{[a,b]}\cdot\Proj_{U(H)}\lim_{n\rightarrow\infty}Z_n={1}_{[a,b]}\cdot\Proj_{U(H)}X={1}_{[a,b]}\cdot X;$$ since $U(H)$ is closed, it is sufficient to show that, for each $n\in\mathbb{N}$, $\mathbf{1}_{[a,b]}\cdot X_n\in U(H).$ 

Fix $n\in\mathbb{N}$.  Since $Z_n$ is supported on $[-n,n]$, it is clear that $Z_n\in\Dom(T^k)$ for all $k\in\mathbb{N}$. 

\

\noindent \textbf{Claim 2:}  For all $k\in \mathbb{Z}_{\geq0}$, we have $X_n\in\Dom(T^k)$, and that $$T^k X_n=\Proj_{U(H)}T^kZ_n.$$

\

\noindent \textbf{Proof of Claim 2:}  We prove the claim by induction on $k$.  The case $k=0$ is trivial, since $T^0=I$ and $X_n=\Proj_{U(H)}Z_n$ by definition. Suppose now that $X_n\in\Dom(T^k)$ and $T^k X_n=\Proj_{U(H)}T^kZ_n$. We show $X_n\in\Dom(T^{k+1})$ and $T^{k+1}X_n=\Proj_{U(H)}T^{k+1}Z_n$. 

Set $Z=T^k Z_n$. Since $T^kX_n=\Proj_{U(H)}Z$ by hypothesis, what we want to show is that $\Proj_{U(H)}Z\in\Dom(T)$ and that $T(\Proj_{U(H)}Z)=\Proj_{U(H)}TZ.$ 

Take $z,w\in H$ such that $U(z)=\Proj_{U(H)}Z$ and $U(w)=\Proj_{U(H)}(TZ).$ Then, for all $y\in\Dom(\hat{A}),$
\begin{align*}
	\langle z,\hat{A}y\rangle&=\langle U(z),U(\hat{A}y)\rangle=\langle\Proj_{U(H)}Z,U(\hat{A}y)\rangle=\langle Z,U(\hat{A}y)\rangle=\langle Z,T(U(y))\rangle\\&=\langle TZ,U(y)\rangle=\langle \Proj_{U(H)}(TZ),U(y)\rangle=\langle U(w),U(y)\rangle=\langle w,y\rangle.
\end{align*} 
Thus, since $\hat{A}$ is self-adjoint, we have that $z\in\Dom(\hat{A})$, and $\hat{A}z=w.$ Therefore, $\Proj_{U(H)}Z=U(z)\in\Dom(T)$ and $$T(\Proj_{U(H)}Z)=T(U(z))=U(\hat{A}z)=U(w)=\Proj_{U(H)}(TZ).$$
This finishes the proof of the claim.

By the claim and the definition of $T$, it follows that for any real polynomial $p$, we have that $$p\cdot X_n=\Proj_{U(H)}(p\cdot Z_n)\in U(H).$$

Let $(p_k)_{k\in\mathbb{N}}$ be a sequence of real polynomials such that:
\begin{itemize}
	\item for any $t\in\mathbb{R},$ $\lim_{k\rightarrow\infty}p_k(t)=\mathbf{1}_{[a,b]}(t)$;
	\item for any $t\in[-n,n]$ and $k\in\mathbb{N},$ $|p_k(t)|\leq 2$.
\end{itemize}
To obtain such a sequence, consider, for $k\in\mathbb{N},$ the function $c_k:\mathbb{R}\rightarrow\mathbb{R}$ given by $$c_k(t)=\mathbf{1}_{[a,b]}(t) +(1+k(t-a))\mathbf{1}_{(a-\frac{1}{k},a)}+(1+k(b-t))\mathbf{1}_{(b,b+\frac{1}{k})}.$$ We have that each $c_k$ is continuous and with values in $[0,1]$,  while the sequence pointwise converges to $\mathbf{1}_{[a,b]}$. Thus, by the Stone-Weierstrass Theorem, we can take $p_k$ such that $|p_k(t)-c_k(t)|\leq\frac{1}{k}$ for all $t\in [-n-k,n+k].$

By  the Dominated Convergence Theorem and the fact that $Z_n$ is supported on $[-n,n]$, we have that $\lim_{k\rightarrow\infty}(p_k\cdot Z_n)=\mathbf{1}_{[a,b]}\cdot Z_n$ in $\int_{\mathbb{R}}^{\oplus}H_td\mu(t).$ Since projections are continuous, we have that $$\lim_{k\rightarrow\infty}(p_k\cdot X_n)=\Proj_{U(H)}(\mathbf{1}_{[a,b]}\cdot Z_n).$$ In other words, we have $$\lim_{k\rightarrow\infty}\int_{\mathbb{R}}\left\|(p_k\cdot X_n)(t)-(\Proj_{U(H)}(\mathbf{1}_{[a,b]}\cdot Z_n))(t) \right\|^2d\mu(t)=0,$$ and so there exists a subsequence $(p_{k_l})_{l\in\mathbb{N}}$ such that $$\lim_{l\rightarrow\infty}\|(p_{k_l}\cdot X_n)(t)-(\Proj_{U(H)}(\mathbf{1}_{[a,b]}\cdot Z_n))(t)\|^2=0$$ and thus $$\lim_{l\rightarrow\infty}(p_{k_l}\cdot X_n)(t)=(\Proj_{U(H)}(\mathbf{1}_{[a,b]}\cdot Z_n))(t)$$  in $H_t$ for $\mu$-almost all $t\in \mathbb{R}$. However, for all $t\in \mathbb{R}$, we have $$\lim_{l\rightarrow\infty}(p_{k_l}\cdot X_n)(t)=\mathbf{1}_{[a,b]}(t)\cdot X_n(t).$$ Thus, we have that $\Proj_{U(H)}(\mathbf{1}_{[a,b]}\cdot Z_n)=\mathbf{1}_{[a,b]}\cdot X_n$ in $\int_{\mathbb{R}}^{\oplus}H_td\mu(t).$ Therefore, $\mathbf{1}_{[a,b]}\cdot X_n\in U(H),$ concluding the proof that $U$ is surjective.
\end{proof}


\section{The Spectral measure version of the Spectral Theorem}
In this section, we establish a relationship between the spectral measure of a self-adjoint operator $A$ and the measures $\nu^{x,y}$ constructed above, yielding a proof of the spectral measure version of the Spectral Theorem.  We begin by recalling some basic definitions and facts:
\begin{defn}
    A \textbf{projection-valued measure} is a function $P$ from the Borel subsets of $\mathbb{R}$ to the set of projections on $H$ such that:
    \begin{enumerate}
        \item $P(\emptyset)=0$ and $P(\mathbb{R})=I$;
        \item $P(V\cap W)=P(V)P(W)$; and
        \item $P(\bigcup_{j\in\mathbb{N}}V_j)=\sum_{j\in\mathbb{N}}P(V_j)$ whenever the $V_j$ are pairwise disjoint.
    \end{enumerate}

\end{defn}

     Projection-valued measures are described in detail in \cite[Chapter IX]{conway2019course}. We note here that for all $x,y\in H$, the map $P_{x,y}:\operatorname{Borel}(\mathbb{R})\rightarrow \mathbb{K}$ given by $P_{x,y}(B)=\langle P(B)x,y\rangle$ is a $\mathbb{K}$-signed measure, which is positive whenever $x=y$. Furthermore, for any measurable $\phi:\mathbb{R}\rightarrow\mathbb{R},$ there is a unique self-adjoint map, denoted $\int_{\mathbb{R}}\phi dP,$ such that $\Dom(\int_{\mathbb{R}}\phi dP)=\{x\in H\;|\; \int_{\mathbb{R}}|\phi|^2dP_{x,x}<\infty \}$ and such that, for any $x\in\Dom(\int_{\mathbb{R}}\phi dP)$ and $y\in H$, we have $$\left\langle\left(\int_{\mathbb{R}}\phi dP\right) x,y\right\rangle=\int_{\mathbb{R}}\phi dP_{x,y};$$ see, for example, \cite[Chapter X, Theorem 4.7]{conway2019course}).

The following is another form of the Spectral Theorem, which one might call the \emph{spectral measure} version of the Spectral Theorem, to distinguish it from the \emph{direct integral} version proved earlier in this paper.

\begin{thm}
    Suppose that $A$ is self-adjoint. For any Borel set $B\subseteq \mathbb{R}$, there is a bounded operator $P(B):H\rightarrow H$ for which $\langle P(B)x, y \rangle= \nu^{x,y}(B)$ for all $x,y\in H$. Moreover, $P(B)$ is a projection for each Borel set $B\subseteq \mathbb{R}$ and $P$ is a projection-valued measure for which $A=\int_{\mathbb{R}}\Id_{\mathbb{R}} dP$.
\end{thm}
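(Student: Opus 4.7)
The plan is to transport everything through the unitary $U$ constructed in Section 5, which is now genuinely unitary by Theorem \ref{theorem_surjectivity} (since $A$ self-adjoint forces $\hat A=A$, as self-adjoint operators admit no proper closed symmetric extensions). The natural candidate is to define, for each Borel $B\subseteq \mathbb{R}$,
\[
P(B):=U^{-1}\circ T_{\mathbf{1}_B}\circ U,
\]
where $T_{\mathbf{1}_B}$ is the bounded multiplication operator on $\int_{\mathbb{R}}^{\oplus}H_td\mu(t)$. Boundedness is immediate since $\|\mathbf{1}_B\|_{\infty}\leq 1$, and since $\mathbf{1}_B^2=\mathbf{1}_B$ is real-valued, $T_{\mathbf{1}_B}$ is an orthogonal projection, hence so is $P(B)$.

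The inner-product identification $\langle P(B)x,y\rangle=\nu^{x,y}(B)$ then drops out of Proposition \ref{proposition_isometry_derivatives}:
\[
\langle P(B)x,y\rangle=\langle T_{\mathbf{1}_B}U(x),U(y)\rangle_{\mu}=\int_B\langle U(x),U(y)\rangle d\mu=\int_B \frac{d\nu^{x,y}}{d\mu}d\mu=\nu^{x,y}(B).
\]
The projection-valued measure axioms are then routine verifications on the direct integral side: $P(\emptyset)=0$ and $P(\mathbb{R})=I$ come from $T_0=0$ and $T_1=I$; multiplicativity $P(V\cap W)=P(V)P(W)$ follows from $\mathbf{1}_{V\cap W}=\mathbf{1}_V\mathbf{1}_W$; and countable additivity on disjoint unions $(V_j)$ reduces, via $U$, to the identity $\mathbf{1}_{\bigsqcup V_j}X=\sum_j\mathbf{1}_{V_j}X$ in $\int^{\oplus}_{\mathbb{R}}H_td\mu$, which holds by the Dominated Convergence Theorem applied pointwise in the Hilbert family.

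For the final claim $A=\int_{\mathbb{R}}\mathrm{Id}_{\mathbb{R}}\,dP$, I would use the defining characterization of the right-hand operator together with the intertwining relation $U\circ A\subseteq T\circ U$ (Theorem \ref{theorem_conclude_spectral}) and its sharpening to equality on $\Dom(A)=U^{-1}(\Dom(T))$ coming from the forward direction of Theorem \ref{theorem_surjectivity}. Concretely: since $P_{x,x}=\nu^{x,x}$ and $\frac{d\nu^{x,x}}{d\mu}=\|U(x)\|^2$, the domain condition $\int_{\mathbb{R}}|t|^2 dP_{x,x}<\infty$ translates to $\int_{\mathbb{R}}|t|^2\|U(x)(t)\|^2 d\mu<\infty$, that is, $U(x)\in \Dom(T)$, so the two domains coincide. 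For $x\in \Dom(A)$ and $y\in H$, a similar computation using Proposition \ref{proposition_isometry_derivatives} gives
\[
\Big\langle\Big(\textstyle\int_{\mathbb{R}}\mathrm{Id}_{\mathbb{R}}\,dP\Big)x,y\Big\rangle=\int_{\mathbb{R}}t\,d\nu^{x,y}(t)=\int_{\mathbb{R}}t\langle U(x)(t),U(y)(t)\rangle d\mu(t)=\langle T(U(x)),U(y)\rangle_{\mu}=\langle Ax,y\rangle,
\]
which, by uniqueness in the defining characterization of $\int \mathrm{Id}_{\mathbb{R}}\,dP$, identifies the two self-adjoint operators.

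No step should present real difficulty; the only mild subtlety is justifying $\hat A=A$ so that Theorem \ref{theorem_surjectivity} applies and yields $U$ unitary, and keeping track of Radon–Nikodym densities to switch freely between integrals against $\mu$ and against $\nu^{x,y}$. Both are already essentially packaged in the results of Sections 5 and 6, so the proof is really just a conversion of the direct integral version of the spectral theorem into its projection-valued measure form via the unitary $U$.
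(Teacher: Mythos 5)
Your proposal is correct and follows essentially the same route the paper takes: define $P(B)=U^{-1}\circ T_{\mathbf{1}_B}\circ U$ (using that $A$ self-adjoint forces $\hat A=A$, hence $U$ is unitary by Theorem~\ref{theorem_surjectivity}), identify $\langle P(B)x,y\rangle$ with $\nu^{x,y}(B)$ via Proposition~\ref{proposition_isometry_derivatives}, read off the projection-valued measure axioms on the direct integral side, and then compute $\langle(\int\Id_{\mathbb{R}}dP)x,y\rangle=\langle Ax,y\rangle$ for $x\in\Dom(A)$. The one place where you do slightly more work than necessary is the domain identification $\Dom(A)=U^{-1}(\Dom(T))$: you attribute it to ``the forward direction of Theorem~\ref{theorem_surjectivity},'' but that theorem's statement does not assert this (the fact is buried in its proof, where $\hat A$ is shown to equal $U^{-1}TU$). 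The paper sidesteps this entirely by the observation that $A$ and $\int\Id_{\mathbb{R}}dP$ are both self-adjoint, so it suffices to verify the containment $A\subseteq\int\Id_{\mathbb{R}}dP$ --- that is, that every $x\in\Dom(A)$ satisfies $\int|t|^2d\nu^{x,x}<\infty$ and $\int t\,d\nu^{x,x}=\langle Ax,x\rangle$ --- after which maximality of self-adjoint operators gives equality of domains for free. Your computations already establish exactly this containment, so your argument is sound; replacing the explicit domain matching with the maximality observation tightens it and removes the imprecise citation.
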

\begin{proof}
    Using Definition \ref{definition_nu_measure}, it is clear that given any Borel set $B\subseteq \mathbb{R}$, the map $(x,y)\rightarrow \nu^{x,y}(B)$ is a positive semi-definite inner product on $H\times H$ for which $|\nu^{x,y}(B)|\leq\|x\| \|y\|$ holds for all $x,y\in H$. As such, there is a well-defined bounded operator $P(B)$ on $H$ satisfying the condition in the statement of the theorem.  Moreover, it is easy to see that $P(B)$ is a non-negative operator on $H$.

    Furthermore, for all $x,y\in H$, by Proposition \ref{proposition_isometry_derivatives}, we have 
    \begin{align*}
        \langle P(B)x, y\rangle &= \nu^{x,y}(B)=\int_{B} \langle U(x),U(y)\rangle d\mu=\int_{\mathbb{R}}\mathbf{1}_B\cdot \langle U(x),U(y)\rangle d\mu\\
        &=\int_{\mathbb{R}}\langle T_{\mathbf{1}_B}(U(x)),U(y)\rangle d\mu=\langle T_{\mathbf{1}_B}(U(x)),U(y)\rangle_{\mu}\\
        &=\langle (U^{-1}\circ T_{\mathbf{1}_B} \circ U) x,y\rangle.
    \end{align*}
    Thus, $P(B)=U^{-1}\circ T_{\mathbf{1}_B}\circ U$ holds for any Borel set $B$. It follows that $P$ is a spectral measure on $H$ for which $P_{x,y}=\nu^{x,y}$ for all $x,y\in H$. Since both $A$ and $\int_{\mathbb{R}}\Id_{\mathbb{R}} dP$ are self-adjoint, to conclude $A=\int_{\mathbb{R}}\Id_{\mathbb{R}}dP$, it is sufficient to show that, for any $x\in\Dom(A)$, we have $\int_{\mathbb{R}}|\Id_{\mathbb{R}}|^2d\nu^{x,x}<\infty$ and $\int_{\mathbb{R}}
    \Id_{\mathbb{R}}d\nu^{x,x}=\langle Ax, x\rangle$. For any such $x\in\Dom(A),$ we have, by Theorem \ref{theorem_conclude_spectral}, that
    \begin{align*}
        \int_{\mathbb{R}}|\Id_{\mathbb{R}}|^2d\nu^{x,x}&=\int_{\mathbb{R}} |\Id_{\mathbb{R}}|^2\cdot\|U(x)\|^2d\mu=\int_{\mathbb{R}}\|U(Ax)\|^2d\mu=\|Ax\|^2<\infty,
    \end{align*}
    and that
    \begin{align*}
        \int_{\mathbb{R}}\Id_{\mathbb{R}}d\nu^{x,x}&=\int_{\mathbb{R}}\Id_{\mathbb{R}}\cdot\|U(x)\|^2d\mu=\int_{\mathbb{R}}\langle T(U(x)),U(x)\rangle d\mu\\
        &=\langle U(Ax), U(x)\rangle_{\mu}=\langle Ax,x\rangle.
    \end{align*}

    Thus $A=\int_{\mathbb{R}}\Id_{\mathbb{R}}dP,$ concluding the proof.
\end{proof}

\appendix

\section{Appendix on measurable structures}

In this appendix, we review the necessary material on measurable structures and direct integrals of Hilbert spaces needed throughout the paper.  For more details, we recommend the reader consult \cite{dixmier_1969Ladd}.

\begin{defn}\label{def:measstructuredefinition}
Suppose that $(M,\Omega)$ is a measurable space and $(H_x)_{x\in M}$ is a family of separable Hilbert spaces.  Then a \textbf{measurable structure} (relative to this data) is a subspace $S$ of $\prod_{x\in X}H_x$ satisfying the following conditions:
\begin{enumerate}
    \item For all $Y\in \prod_{x\in X}H_x$, one has $Y\in S$ if and only if $\langle X,Y\rangle$ is measurable for all $X\in S$.
    \item There is a countable subset $S'$ of $S$ such that $\overline{\Span(\{X(x) \ : \ X\in S'\})}=H_x$ for all $x\in M$.
\end{enumerate}
The elements of $S$ are referred to as the \textbf{measurable sections} of the measurable structure.
\end{defn}
\begin{rmk}
    By the first property in the previous definition, we have that if $X\in S$ and $a:M\rightarrow\mathbb{K}$ is any measurable function, then $a\cdot X\in S.$
\end{rmk}

\begin{example}\label{example:standardmeas}
Suppose that $H_x=H$ for all $x\in M$ (so that $\prod_{x\in M}H_x=H^M$), where $H$ is some separable Hilbert space.  Then the collection of all measurable functions $M\to H$ is a measurable structure, which we refer to as the \textbf{canonical measurable structure}.  Here, and in what follows, we always view a Hilbert space as a measuable space by equipping it with its Borel $\sigma$-algebra.
\end{example}
In connection with the previous example, we mention the following fact, which is probably well-known but for which we were unable to find a reference:

\begin{lemma}\label{lem:HSmeasurablefunction}
Suppose that $(M,\Omega)$ is a measurable space and that $H$ is a separable Hilbert space.  Then a function $X:M\to H$ is measurable if and only if, for every $y\in H$, one has that the function $x\mapsto \langle X(x),y\rangle:M\to \mathbb{K}$ is measurable.    
\end{lemma}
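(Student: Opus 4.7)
My plan is to prove the two directions separately, with the forward direction being immediate and the backward direction reducing to checking measurability on a generating family of Borel sets.

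For the forward direction, suppose $X\colon M \to H$ is measurable. For any fixed $y \in H$, the map $z \mapsto \langle z, y\rangle\colon H \to \mathbb{K}$ is continuous by the Cauchy--Schwarz inequality, hence Borel measurable. Thus $x \mapsto \langle X(x), y\rangle$ is the composition of a measurable map with a continuous map, and so is measurable. This direction is essentially a one-line observation.

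For the backward direction, assume that $x \mapsto \langle X(x), y\rangle$ is measurable for every $y \in H$. I want to show $X^{-1}(B) \in \Omega$ for every Borel $B \subseteq H$. Since $H$ is separable (hence second countable as a metric space), its Borel $\sigma$-algebra is generated by the family of open balls $B(z,r)$ with $z$ in some fixed countable dense subset $D \subseteq H$ and $r \in \mathbb{Q}_{>0}$. Therefore it suffices to show that, for each such $z$ and $r$, the set $X^{-1}(B(z,r)) = \{x \in M : \|X(x) - z\| < r\}$ lies in $\Omega$. Equivalently, it suffices to show that $x \mapsto \|X(x) - z\|^2$ is a measurable real-valued function for each $z \in H$.

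The key step is to express this norm in terms of the measurable scalar-valued functions $\langle X(\cdot), y\rangle$. Using the polarization-type identity
\begin{equation*}
\|X(x) - z\|^2 \;=\; \|X(x)\|^2 \;-\; 2\operatorname{Re}\langle X(x), z\rangle \;+\; \|z\|^2,
\end{equation*}
the middle term is measurable by hypothesis, and the last term is a constant, so it remains only to prove that $x \mapsto \|X(x)\|^2$ is measurable. Here I invoke separability a second time: fix an orthonormal basis $(e_n)_{n \in \mathbb{N}}$ of $H$, and use Parseval's identity to write
\begin{equation*}
\|X(x)\|^2 \;=\; \sum_{n \in \mathbb{N}} |\langle X(x), e_n\rangle|^2.
\end{equation*}
Each summand is measurable by the hypothesis applied to $y = e_n$, and a countable sum of nonnegative measurable functions is measurable. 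This completes the proof.

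I do not anticipate any real obstacle: the argument is a routine unpacking of measurability into a countable generating family, with separability of $H$ used in two places (once to generate the Borel $\sigma$-algebra by balls centered at a countable set, and once to express the norm as a countable sum via an orthonormal basis).
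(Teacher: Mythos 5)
Your proof is correct and follows essentially the same path as the paper's: reduce the reverse direction to measurability of $x\mapsto \|X(x)-z\|^2$ via the fact that open balls generate the Borel $\sigma$-algebra of the separable space $H$, then invoke Parseval with an orthonormal basis to write this squared norm as a countable sum of measurable functions. The only cosmetic difference is that you expand $\|X(x)-z\|^2$ via the inner-product identity before applying Parseval to $\|X(x)\|^2$, whereas the paper applies Parseval directly to $\|X(x)-u\|^2$; these are the same computation.
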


\begin{proof}
    On the one hand, since $u\mapsto \langle u,y\rangle$ is a continuous function on $H$, it is clear that if $X$ is measurable, then $x\mapsto \langle X(x), y\rangle$ is measurable as well.  
    
    On the other hand, since $H$ is a separable metric space, any open set is the countable union of open balls. Thus, the collection of open balls generates the Borel $\sigma$-algebra on $H$; consequently, $X$ is measurable if and only if, for any $u\in H$ and $r\in\mathbb{R}_{>0}$, $X^{-1}(B_{r}(u))$ is measurable, which happens if and only if, for any $u\in H$, $x\mapsto\|X(x)-u\|^2$ is measurable. If $\{g_j\}_{j\in\mathbb{N}}$ is an orthonormal basis of $H$, Parseval's identity yields that $$\|X(x)-u\|^2=\sum_{j\in\mathbb{N}}\left|\langle X(x),g_j\rangle -\langle u, g_j \rangle \right|^2$$ for any $x\in M.$ Thus, if $x\mapsto\langle X(x), y\rangle$ is measurable for each $y\in H$, it easily follows that $x\mapsto\|X(x)-u\|^2$ is measurable.
\end{proof}

In the paper, we will need the following construction, which is probably well-known but for which we could not find a suitable reference.

\begin{defn}\label{def:meassubfamily}
Suppose that $(V_j:M\to H)_{j\in \mathbb{N}}$ is a collection of measurable functions.  For each $x\in M$ and $n\in\mathbb{N}$, we define $E_n(x)=\Span\{V_j(x)\}_{j=1}^n$. Finally, given $x\in M$, we define $$H_x=\overline{\Span\{V_j(x)\}_{j\in\mathbb{N}}}=\overline{\bigcup_{n\in\mathbb{N}}E_n(x)}\leq H.$$
\end{defn}

We wish to have the canonical measurable structure on $H^M$ induce a measurable structure on $\prod_{x\in X}H_x$ in the obvious way.  The key to doing so is the following lemma:

\begin{lemma}\label{lemma_measurable_projection}
For any $n\in\mathbb{N}$ and measurable function $Y:M\rightarrow H$, there exists measurable functions $a_1,\;\dots,\;a_n:M\rightarrow\mathbb{K}$ such that, for any $x\in M,$ we have $$\Proj_{E_n(x)}Y(x)=\sum_{j=1}^{n} a_j(x)V_j(x).$$ In particular, $x\rightarrow\Proj_{E_n(x)}Y(x)$ is measurable. Finally, the map $x\rightarrow \Proj_{H_x}Y(x)$ is measurable as well.
\end{lemma}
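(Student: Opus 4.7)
The plan is to reduce everything to the case where the spanning vectors are linearly independent, where the coefficients can be recovered via Cramer's rule from the Gram matrix.

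First, I would observe that for each subset $S\subseteq \{1,\ldots,n\}$, the Gram matrix $G_S(x)$ with entries $\langle V_i(x),V_j(x)\rangle_{i,j\in S}$ is measurable in $x$ (by Lemma \ref{lem:HSmeasurablefunction}), and its determinant $\det G_S(x)$ is a measurable function of $x$. Since $\{V_j(x)\}_{j\in S}$ is linearly independent if and only if $\det G_S(x)>0$, I can partition $M$ into measurable pieces according to which canonical maximal linearly independent subset of $\{V_1(x),\ldots,V_n(x)\}$ one obtains. Specifically, define $S(x)\subseteq\{1,\ldots,n\}$ by the greedy rule: $j\in S(x)$ iff $V_j(x)\notin\Span\{V_k(x) : k<j,\ k\in S(x)\}$. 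For each fixed $S$, the set $M_S=\{x\in M: S(x)=S\}$ is a Boolean combination of measurability/nonmembership conditions on Gram determinants of subsets of $\{1,\ldots,n\}$, so each $M_S$ is measurable and the $M_S$ partition $M$.

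Next, on each $M_S$, I would note that $E_n(x)=\Span\{V_j(x):j\in S\}$, with $\{V_j(x)\}_{j\in S}$ an actual basis. The projection $\Proj_{E_n(x)}Y(x)$ equals $\sum_{j\in S}b_j(x)V_j(x)$, where the coefficient vector $(b_j(x))_{j\in S}$ is the unique solution of the linear system $G_S(x)\cdot b=w(x)$, with $w(x)_j=\langle Y(x),V_j(x)\rangle$ for $j\in S$. By Cramer's rule, each $b_j(x)$ is a quotient of measurable functions with strictly positive denominator $\det G_S(x)$ on $M_S$, so each $b_j|_{M_S}$ is measurable. Now I set
\[
a_j(x)=\begin{cases}b_j(x) & \text{if } j\in S(x),\\ 0 & \text{if } j\notin S(x);\end{cases}
\]
these are measurable (as finite sums of indicator functions on $M_S$ times measurable functions), and by construction $\Proj_{E_n(x)}Y(x)=\sum_{j=1}^n a_j(x)V_j(x)$ for every $x$. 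Measurability of $x\mapsto\Proj_{E_n(x)}Y(x)$ as a function $M\to H$ is then immediate: it is a finite sum of measurable scalar functions times the measurable $H$-valued functions $V_j$.

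For the final statement, I would invoke the fact that if $\{F_n\}$ is an increasing sequence of closed subspaces of a Hilbert space with $F_\infty=\overline{\bigcup_n F_n}$, then $\Proj_{F_n}z\to\Proj_{F_\infty}z$ in norm for every $z$. Applying this pointwise with $F_n=E_n(x)$ and $F_\infty=H_x$ gives $\Proj_{H_x}Y(x)=\lim_{n\to\infty}\Proj_{E_n(x)}Y(x)$, a pointwise limit of measurable $H$-valued functions, hence measurable.

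The main obstacle is the first step: managing possible linear dependences among the $V_j(x)$ measurably. Once the partition of $M$ by the canonical index set $S(x)$ is seen to be measurable (via the Gram determinants), the rest is just Cramer's rule plus strong continuity of monotone projection limits.
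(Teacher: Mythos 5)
Your proof is correct but takes a genuinely different route from the paper. The paper proceeds by induction on $n$ using a Gram--Schmidt step: it sets $\hat{V}(x)=V_n(x)-\Proj_{E_{n-1}(x)}V_n(x)$, defines $a(x)=\mathbf{1}_{\hat{V}^{-1}(H\setminus\{0\})}(x)\,\langle Y(x),\hat{V}(x)\rangle/\|\hat{V}(x)\|^2$, observes that $\Proj_{E_n(x)}Y(x)=a(x)V_n(x)+\Proj_{E_{n-1}(x)}\bigl(Y(x)-a(x)V_n(x)\bigr)$, and applies the inductive hypothesis to the measurable function $Y-a\cdot V_n$. Linear dependence is handled automatically by the indicator $\mathbf{1}_{\hat{V}^{-1}(H\setminus\{0\})}$, so no decomposition of $M$ is needed. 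You instead partition $M$ into measurable pieces $M_S$ according to which greedy subset $S(x)\subseteq\{1,\dots,n\}$ forms a basis of $E_n(x)$ --- which works, since the sets $M_S$ are characterized by vanishing or nonvanishing of various Gram determinants, all measurable by Lemma \ref{lem:HSmeasurablefunction} --- and then solve for the coefficients on each $M_S$ by Cramer's rule applied to the positive-definite Gram system. Your approach is more explicit about the structure of the coefficients (they are ratios of Gram determinants) and avoids recursion, at the cost of maintaining a combinatorial partition of the base space; the paper's inductive argument is shorter because the indicator trick at each stage silently absorbs degeneracies without ever isolating them. For the final claim, the paper passes to the weak limit $\langle\Proj_{H_x}Y(x),y\rangle=\lim_n\langle\Proj_{E_n(x)}Y(x),y\rangle$ and invokes Lemma \ref{lem:HSmeasurablefunction}; you invoke strong convergence of projections onto an increasing sequence of closed subspaces and use that a pointwise limit of measurable $H$-valued maps is measurable. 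Both versions of the final step are standard and correct.
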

\begin{proof}
	 We prove the lemma by induction on $n$. If $n=1,$ then the formula is satisfied by the measurable coefficient $$a_1(x)=\mathbf{1}_{V_1^{-1}(H\setminus\{0\})}(x)\frac{\langle Y(x),V_1(x)\rangle}{\|V_1(x)\|^2}.
	$$ Thus, the lemma holds for $n=1$.
	 
	If, given $n\geq 2$ the lemma holds for $n-1,$ we show it holds for $n$.
	
	Let $\hat{V}(x)=V_n(x)-\Proj_{E_{n-1}(x)}V_n(x)$ and  $a(x)=\mathbf{1}_{\hat{V}^{-1}(H\setminus\{0\})}(x)\cdot\frac{\langle Y(x),\hat{V}(x)\rangle}{\|\hat{V}(x)\|^2},$ where $x\in M.$  Then, for any such $x$, we have that
	\begin{align*}\Proj_{E_n(x)}Y(x)&=\Proj_{E_{n-1}(x)}(Y(x))+a(x)\hat{V}(x)\\
		&=a(x)V_n(x)+\Proj_{E_{n-1}(x)}\left(Y(x)-a(x)V_n(x)\right).
	\end{align*} By hypothesis, we have that $\hat{V}:M\rightarrow H$ is measurable, from which we have that $a:M\rightarrow\mathbb{K}$ is measurable as well. Thus, $Y-a\cdot V_n$ is measurable. By hypothesis, let $a_1,\;\dots,\;a_{n-1}:M\rightarrow\mathbb{K}$ be measurable coefficients satisfying $$\Proj_{E_{n-1}(x)}\left(Y(x)-a(x)V_n(x)\right)=\sum_{j=1}^{n-1}a_j(x)V_j(x).$$ Letting $a_n=a$, we obtain $$\Proj_{E_n(x)}Y(x)=\sum_{j=1}^{n} a_j(x)V_j(x)$$ for any $x\in M$. Thus, the lemma holds for $n$, completing the induction. The fact that $x\rightarrow \Proj_{H_x}Y(x)$ is measurable follows from Lemma \ref{lem:HSmeasurablefunction} and the fact that $\langle \Proj_{H_x}Y(x), y\rangle=\lim_{n\rightarrow\infty}\langle \Proj_{E_n(x)}Y(x), y\rangle$ for any $x\in M.$
\end{proof}

\begin{prop}\label{prop:inducedmeasfamily}
  Define $S\subseteq \prod_{x\in M}H_x$ to be the collection of those functions $X\in \prod_{x\in M} H_x$ such that $X$ is measurable as an element of $H^M$.  Then $S$ is a measurable structure.
\end{prop}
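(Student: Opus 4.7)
The plan is to verify the two conditions of Definition \ref{def:measstructuredefinition} in turn. For condition (2), the natural candidate is $S' = \{V_j\}_{j\in\mathbb{N}}$: each $V_j$ is measurable as a function $M\to H$ by hypothesis and trivially takes values in $H_x$, hence $V_j\in S$; moreover, the density of $\Span\{V_j(x)\}_{j\in\mathbb{N}}$ in $H_x$ is exactly the definition of $H_x$. So (2) is immediate.

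For condition (1), the work is in the biconditional. The forward direction (if $Y\in S$ then $\langle X, Y\rangle$ is measurable for every $X\in S$) is the standard fact that the inner product of two measurable functions into a separable Hilbert space is measurable: fixing an orthonormal basis $(g_k)$ of $H$, one expands
\[
\langle X(x), Y(x)\rangle = \sum_{k\in\mathbb{N}} \overline{\langle Y(x), g_k\rangle}\cdot \langle X(x), g_k\rangle,
\]
where each summand is measurable by Lemma \ref{lem:HSmeasurablefunction} applied to both $X$ and $Y$, and the series converges pointwise.

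The backward direction is the more delicate one, and I expect it to be the main conceptual step. Suppose $Y\in \prod_{x\in M}H_x$ and $\langle X, Y\rangle$ is measurable for every $X\in S$; the goal is to show $Y\colon M\to H$ is measurable. By Lemma \ref{lem:HSmeasurablefunction}, it suffices to show that $x\mapsto\langle Y(x), y\rangle$ is measurable for every $y\in H$. The naive attempt of using the constant section $x\mapsto y$ fails, since this section need not lie in $S$ (typically $y\notin H_x$). The fix is to replace $y$ by its projection onto $H_x$: define $X_y(x):=\Proj_{H_x}y$. By Lemma \ref{lemma_measurable_projection} (applied with the section $x\mapsto y$, which is trivially measurable as an element of $H^M$), $X_y$ is measurable as a function $M\to H$, and by construction $X_y(x)\in H_x$, so $X_y\in S$. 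Since $Y(x)\in H_x$, we have $\langle X_y(x), Y(x)\rangle = \langle \Proj_{H_x}y, Y(x)\rangle = \langle y, Y(x)\rangle$, and the hypothesis gives that this is a measurable function of $x$. Taking complex conjugates, $x\mapsto \langle Y(x), y\rangle$ is measurable, completing the argument via Lemma \ref{lem:HSmeasurablefunction}.

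Thus both conditions hold and $S$ is a measurable structure. The only nontrivial ingredient is the use of Lemma \ref{lemma_measurable_projection} to produce enough measurable sections of $(H_x)_{x\in M}$ from the constant sections of $H^M$; everything else reduces to Lemma \ref{lem:HSmeasurablefunction} and routine manipulations with orthonormal bases.
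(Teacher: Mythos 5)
Your proof is correct and follows the same route as the paper: condition (2) is handled by taking $S'=\{V_j\}_{j\in\mathbb{N}}$, and the crucial backward direction of condition (1) is settled by the same projection trick — replacing the constant $y$ by the measurable section $x\mapsto\Proj_{H_x}y$ (whose measurability comes from Lemma~\ref{lemma_measurable_projection}) and observing $\langle Y(x),y\rangle=\langle Y(x),\Proj_{H_x}y\rangle$. The only difference is that you spell out the forward direction via an orthonormal-basis expansion where the paper simply calls it immediate.
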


\begin{proof}
We check the two conditions from Definition \ref{def:measstructuredefinition}.  The forward direction of (1) is immediate.  To check the reverse direction, suppose that $Y\in \prod_{x\in M}H_x$ is such that the map $x\mapsto \langle Y(x),X(x)\rangle:M\to \mathbb{K}$ is measurable for all $X\in S$.  We wish to show that $Y\in S$.
In other words, we want to show that, for any $y\in\ H$, $x\mapsto \langle Y(x),y\rangle$ is measurable. For any $x\in M$, we have that $\langle Y(x),y\rangle=\langle Y(x),\Proj_{H_x}y\rangle.$ Since constant maps are measurable, it follows from Lemma \ref{lem:HSmeasurablefunction} that $x\rightarrow \Proj_{H_x}y:M\to H$ is measurable, and as such it belongs to $S$. Thus, by assumption, we have that $x\rightarrow\langle Y(x),y\rangle=\langle Y(x),\Proj_{H_x}y\rangle $ is measurable.

Item (2) of Definition \ref{def:measstructuredefinition} is immediate with $S'=\{V_j\}_{j\in\mathbb{N}}$.
\end{proof}

Finally, we recall the definition of a direct integral of Hilbert spaces.  Suppose that $\mu$ is a $\sigma$-additive measure on $(M,\Omega)$ and let $S\subseteq \prod_{x\in M}H_x$ be a measurable structure.  We define $\int^{\oplus}_M H_xd\mu(x)$ to be the Hilbert space consisting of those measurable sections $X\in S$ such that $$\int_M \|X(x)\|^2d\mu(x)<\infty,$$ quotiented by those sections that are almost-everywhere $0$, and where the inner product is given by $$\langle X,Y\rangle_{\mu}=\int_M \langle X,Y\rangle d\mu.$$ That $\int_M^{\oplus}H_xd\mu$ is indeed a Hilbert space follows from \cite[Chapter II]{dixmier_1969Ladd}; the proof of this fact is very similar to the usual proof that $L^p$-spaces are complete. 


We note that in this paper, we apply the direct integral construction in two special cases:  (1) $M$ is a finite probability space with $\mu(x)>0$ for every $x\in M$, in which case $\int_M^{\oplus}H_xd\mu(x)=\prod_{x\in M}H_x$ (in the text we really work with the transferred version of the notion), and (2) $M=\mathbb{R}$ with its Borel algebra.  

\printbibliography
\end{document}